\documentclass[11pt,a4paper]{article}

\pdfoutput=1

\usepackage{calc}
\usepackage{color}
\usepackage{amsmath}
\usepackage{amsthm}
\usepackage{amssymb}
\usepackage{graphicx}
\usepackage{natbib}
\usepackage{enumitem}

\usepackage[margin=2em,labelfont=bf]{caption}
\usepackage[english]{babel}

\renewcommand{\baselinestretch}{1.2}

\setcounter{bottomnumber}{3}

\newtheorem{thm}{Theorem}
\newtheorem{lem}[thm]{Lemma}
\newtheorem{cor}[thm]{Corollary}

\newcommand{\rr}[1]{{\ttfamily\slshape\color{DarkGreen} #1}}

\textwidth6.3in
\textheight9.7in 
\topmargin-55pt
\oddsidemargin0in
\evensidemargin0in
\headsep30pt
\headheight15pt
\footskip50pt

\definecolor{DarkBlue}{rgb}{0,0,0.5451}
\definecolor{DarkGreen}{rgb}{0,0.39216,0}
\definecolor{LightYellow}{rgb}{1,1,.8}
\definecolor{orange}{rgb}{.9,0.3445,0}

\makeatletter
\let\oldc\c
\renewcommand{\|}{|\!|}         
\newcommand{\T}{{}^{\mathsf{T}}}

\newcommand{\new}{\text{new}}
\newcommand{\ML}{\text{ML}}

\DeclareMathOperator*{\argmin}{argmin}

\DeclareMathOperator{\cov}{Cov}                   
                 
\DeclareMathOperator{\var}{Var}                   
\newcommand{\tr}{\text{tr}}
\DeclareMathOperator{\E}{E}                       

\newcommand{\cD}{{\cal{D}}} 
\newcommand{\cK}{{\cal{K}}}

\newcommand{\cO}{{\cal{O}}} 
\newcommand{\IN}{{\mathbb{N}}}
\newcommand{\IR}{{\mathbb{R}}}
\newcommand{\btheta}{{\boldsymbol{\theta}}{}}

\newcommand{\bsigma}{{\boldsymbol{\sigma}}}
\newcommand{\bTheta}{{\boldsymbol{\Theta}}}
\newcommand{\bSigma}{{\boldsymbol{\Sigma}}{}}
\newcommand{\A}{{\mathbf{A}}}
\newcommand{\B}{{\mathbf{B}}}
\newcommand{\C}{{\mathbf{C}}}
\newcommand{\D}{{\mathbf{D}}}
\newcommand{\F}{{\mathbf{F}}}
\newcommand{\G}{{\mathbf{G}}}
\newcommand{\I}{{\mathbf{I}}}
\newcommand{\K}{{\mathbf{K}}}
\newcommand{\bfM}{{\mathbf{M}}}
\newcommand{\N}{{\mathbf{N}}}
\newcommand{\bfT}{{\mathbf{T}}} 
\newcommand{\Z}{{\mathbf{Z}}}
\newcommand{\0}{{\mathbf{0}}}
\newcommand{\1}{{\mathbf{1}}}
\renewcommand{\a}{{\textbf{\textit{a}}}}
\newcommand{\h}{{\textbf{\textit{h}}}}
\renewcommand{\k}{{\textbf{\textit{k}}}}
\newcommand{\m}{{\textbf{\textit{m}}}}
\newcommand{\s}{{\textbf{\textit{s}}}}
\renewcommand{\v}{{\textbf{\textit{v}}}}
\newcommand{\w}{{\textbf{\textit{w}}}}
\newcommand{\x}{{\textbf{\textit{x}}}}
\newcommand{\z}{{\textbf{\textit{z}}}}

\renewcommand{\baselinestretch}{1.12}

\title{\vspace*{-13mm}\bf\LARGE Asymptotic properties
of multivariate \\[4mm] tapering for estimation and prediction }
\author{
{\begin{minipage}{0.34\textwidth}
\begin{center}
Reinhard Furrer \\
\begin{normalsize}
\textit{reinhard.furrer@math.uzh.ch} \\
University of Zurich \\~
\end{normalsize}
\end{center}
\end{minipage}}\hfill%
\begin{minipage}{0.36\textwidth}
\begin{center}
Fran\oldc cois Bachoc \\
\begin{normalsize}
\textit{francois.bachoc@univie.ac.at} \\
University of Vienna \\~
\end{normalsize}
\end{center}
\end{minipage}%
{\begin{minipage}{0.28\textwidth}
\begin{center}
Juan Du\\
\begin{normalsize}
\textit{dujuan@ksu.edu} \\
Kansas State University \\~
\end{normalsize}
\end{center}
\end{minipage}}
}
\begin{document}
\graphicspath{{figures/}{simfignew/}}

\maketitle

\noindent{\bf Abstract:} Parameter estimation for and prediction of
spatially or spatio--temporally correlated random processes are used
in many areas and often require the solution of a large linear system
based on the covariance matrix of the observations. In recent years,
the dataset sizes to which these methods are applied have steadily
increased such that straightforward statistical tools are
computationally too expensive to be used.  In the univariate context,
tapering, i.e., creating sparse approximate linear systems, has been
shown to be an efficient tool in both the estimation and prediction
settings. The asymptotic properties are derived under an infill
asymptotic setting.  In this paper we use a domain increasing
framework for estimation and prediction using multivariate tapering.
Under this asymptotic regime we prove that tapering (one-tapered form)
preserves the consistency of the untapered maximum likelihood
estimator and show that tapering has asymptotically the same mean
squared prediction error as using the corresponding untapered
predictor. The theoretical results are illustrated with simulations.

\noindent{\bf Keywords:} one-taper likelihood; Gaussian random field; domain increasing; sparse matrix.

\section{Introduction}

Parameter estimation for and smoothing or interpolation of spatially
or spatio--temporally correlated random processes are used in many
areas and often require the solution of a large linear system based on
the covariance matrix of the observations. In recent years, the
dataset sizes to which these methods are applied have steadily
increased such that straightforward statistical tools are
computationally too expensive to be used.  For example, a typical
Landsat 7 satellite image consists of more than 34 million pixels
(30\,m resolution for an approximate scene size of
170\,km$\times$183\,km; source landsat.usgs.gov). Hence, classical
spatial and spatio--temporal models for such data sizes cannot be
handled with typical soft- and hardware. Thus, one typically relies on
approximation approaches.  In the univariate context, tapering,
i.e. creating sparse approximate linear systems through a direct
product of the (presumed) covariance function and a positive definite
but compactly supported correlation function, has been shown to be an
efficient tool in both the estimation and prediction settings.

The vast majority of the theoretical work on univariate tapering has
been placed in an infill--asymptotic setting using the concept of
Gaussian equivalent measures and mis-specified covariance functions
set forth in a series of papers by M. Stein
(\citeyear{Stei:88:AEP,Stei:90:UAO,Stei:97:ELP,Stei:99:PRF}).
Subsequently,
\cite{Furr:Gent:Nych:06,Kauf:Sche:Nych:08,Du:etal:09} and \cite{Wang:Loh:11} have
assumed a second-order stationary and isotropic Mat\'ern covariance to
show asymptotic optimality for prediction, consistency, and asymptotic
efficiency for estimation.  Recently, \cite{Stei:13} has extended these
results to other covariance functions by placing appropriate conditions
on the spectral density of the covariance.

In the infill--asymptotic setting, it is (essentially) sufficient to
match the degree of differentiability at the origin of an appropriately
chosen taper function with the smoothness of the (Mat\'ern) covariance
at the origin. Loosely speaking, for prediction, the predictor based
on tapered covariances has the same convergence rate as the optimal
predictor and the naive formula for the prediction kriging variance
has the correct convergence rate as well (Theorem~2.1 of
\citealp{Furr:Gent:Nych:06}, Theorem~1 of \citealp{Stei:13}).

For estimation, \citet{Kauf:Sche:Nych:08} introduced the concept of
one-taper and two-taper likelihood equations. In a one-taper setting
only the covariance is tapered while for two-tapered both the
covariance and empirical covariance are affected.  The one-taper
equation results in biased estimates while the two-taper equation is
an estimating equation approach and is thus unbiased. The price of
unbiased estimates is a (severe) loss of the computational efficiency
intended through tapering (see, e.g., Table~2 of
\citealp{Kauf:Sche:Nych:08} or Figure~2 of \citealp{Shab:Rupp:12}).

Extending the idea of tapering to a multivariate setting is not
straightforward.  The infill--asymptotic setting does not allow one to
`embed' the multivariate framework in a univariate one (e.g., as in
\citealp{Sain:Furr:Cres:11} for Gaussian Markov random fields).
\cite{Ruiz:Porc:15} introduced the concept of multivariate Gaussian
equivalent measures, but the conditions are difficult to verify and
their practical applicability is not entirely convincing. Several
authors have recently approached the problem using a increasing-domain
setting (\citealp{Shab:Rupp:12,Beli:etal:15}).  The main advantage of
this alternative sampling scheme is that we are not bound to Mat\'ern
type covariance functions nor to tapers that satisfy the taper
condition (i.e., sufficiently differentiable at the origin and at the
taper length). More so, we will show that for collocated data, other
practical tapers can be described.  The main disadvantage is the
somewhat less-intuitive conceptual framework.  For example, in the
case of heavy metal contents in sediments of a lake,
infill--asymptotics can be mimicked by taking more and more
measurements. In a increasing-domain setting, this is not
possible. On the other hand asymptotics is a theoretical concept and
in practice only a finite number of observations are available.

The main contributions of this paper are as follows: (i) under weak
conditions on the covariance matrix function and the taper (matrix)
function form we show that in a increasing-domain framework the
tapered maximum likelihood estimator preserves the consistency of the
untapered likelihood estimator; (ii) the difference between the
(integrated) mean squared prediction error of the tapered and the
untapered converges in probability to zero, even when prediction is
based on estimated parameters.  Note that although we require that the
taper range increases, no rate assumption is necessary; (iii)
numerical simulations illustrate that the approach has very appealing
finite sample properties, especially for prediction with plugin
estimates we find only a very small loss in efficiency.

This paper is structured as follows: Section~\ref{sec:notation}
introduces basic notation and relevant definitions.  The main results
are given in Section~\ref{sec:results}. Section~\ref{sec:ill}
illustrates the methodology using an extensive simulation study.
Concluding remarks are given in Section~\ref{sec:out}. Proofs and
technical results are presented in the appendix.

Note that compared with directly using compactly supported covariance
functions, tapering has several advantages. Our modeling experience
has shown that the (practical) dependence structure is often larger or
much larger than what can be handled computationally and additional
approximations would be needed anyway.  We see tapering as a
computational approximation that does not alter the statistical
model. The taper range (degree of tapering) depends on the
availability of memory and computing power and thus changes when the
analysis is carried out on different computers or at some later time
with improved hardware.

\section{Notation and setting}\label{sec:notation}
\newtheorem{cond}{Condition}

We denote (deterministic) vectors and matrices with bold lower and upper case
symbols. Random variables and processes are denoted with upper case
symbols and random vectors and vector processes are denoted with bold
upper case symbols.  For $\x \in \IR^m$, we let $|\x| =
\max_{i=1,\dots,m}|x_i|$ and $\|\x\| = \sqrt{ \sum_{i=1}^m x_i^2 }$.

The singular values of a $n\times n$ real matrix $\A=(a_{ij})$ are
denoted by $\rho_1(\A)\geq\dots \geq\rho_n(\A)\geq 0 $ and, in the
case when $\A$ is symmetric, the eigenvalues are denoted by
$\lambda_1(\A)\geq\dots \geq\lambda_n(\A)$.  The spectral norm is
given by $\rho_1(\A)$ and $\|\A\|_F^2=\sum_{i,j}|a_{ij}|^2 $ denotes
the Frobenius norm.

For a sequence of random variables $X_n$, we write $X_n = o_p(1)$ when
$X_n$ converges to $0$ in probability as $n \to \infty$ and we write
$X_n = O_p(1)$ when $X_n$ is bounded in probability as $n \to \infty$.
 
\bigskip

Let, for $d\in\IN^+$ and $p\in\IN^+$, fixed throughout this paper, 
\begin{align}
  \bigl\{ Z_k(\s): \s\in\cD\subset\IR^d,   k=1,\dots,p  \bigr\}
\label{eq:process}
\end{align}
be a multivariate stationary Gaussian random process. We let
$\Z(\s)=(Z_1(\s),\dots,Z_p(\s))\T$.  To simplify the notations, we
assume, essentially without loss of generality, that:

\begin{cond}\label{cond:process}
  Process \eqref{eq:process} has zero mean.
\end{cond}

Let $q \in \IN^+$ and let $\Theta$ be the compact subset
$[\theta_{\inf},\theta_{\sup}]^q$ with $- \infty < \theta_{\inf} <
\theta_{\sup} < + \infty$.  For each $\btheta \in \Theta$ we consider
a candidate stationary matrix covariance function for the
process~\eqref{eq:process}, of the form
$\C(\h;\btheta)=\big(c_{kl}(\h;\btheta)\big)$.  We assume that
there exists $\btheta_0 \in \Theta$, with for $i=1,\dots,q$,
$\theta_{\inf} < \theta_{0i} < \theta_{\sup}$, so that
$\C(\h;\btheta_0)= \cov\big(\Z(\s),\Z(\s+\h)\big)$.  The covariance
function $c_{kk}(\h;\btheta_0)$ of the $k$th (marginal) process is
called a direct covariance (function) and the off-diagonal elements
$c_{kl}(\h;\btheta_0)$, $k\neq l$, are called cross covariance
(functions).  We also consider a stationary taper matrix function of
the form $\big(t_{kl}(\h)\big)$, with $t_{kl}(\h)=0$ for $\|\h\|\geq
1$.

For any $n \in \IN^+$, the Gaussian processes~\eqref{eq:process} are observed
at the points $\x_1,\dots,\x_n \in \IR^d$:  

\begin{cond}\label{cond:collocated} 
   We dispose collocated observations at the distinct locations
  $\x_1,\dots,\x_n\in \IR^d$.
\end{cond}

For $i = (k-1)n + a$ and
$j = (l-1) n + b$, with $k,l=1,\dots,p$ and $a,b=1,\dots,n$, we let
$\z$ be the $np\times1$ Gaussian vector with $z_i=Z_k(\x_a)$, for
$\btheta \in \Theta$ we let $\bSigma_{\btheta}$ be the $np \times np$
covariance matrix with $\sigma_{\btheta i j} = c_{kl}(\x_a-\x_b ;
\btheta)$ and $\bfT$ be the $np \times np$ taper covariance matrix
with $t_{ij} = t_{kl}\big( (\x_a-\x_b)/\gamma_n \big)$, where
$\gamma_n>0$ is the taper range.  We let $\K_{\btheta} =
\bSigma_{\btheta} \circ \bfT$, where the symbol $\circ$ denotes the
direct (Schur) product.
 
The maximum likelihood  (ML) estimator is defined by
$\hat{\btheta}_{\ML} \in \argmin_{\btheta} L_{\btheta}$, with
\begin{equation} \label{eq:MLtheta} L_{\btheta} = \frac{1}{np}
  \log{\left( \det{\left( \bSigma_{\btheta} \right)} \right) } +
  \frac{1}{np} \z \T \bSigma_{\btheta}^{-1} \z.
\end{equation}
The tapered ML estimator is defined by
$\hat{\btheta}_{t\ML} \in \argmin_{\btheta} \bar{L}_{\btheta}$, with
\begin{equation} \label{eq:tMLtheta} \bar{L}_{\btheta} = \frac{1}{np}
  \log{\left( \det{\left( \K_{\btheta} \right)} \right) } +
  \frac{1}{np} \z \T \K_{\btheta}^{-1} \z.
\end{equation}

We can assume, without loss of generality, that $Z_1(\x)$ is the
Gaussian process that is predicted at new points.  Then, for $\x \in
\IR^d$, let $\bsigma_{\btheta}(\x)$ be the $np \times 1$ vector
defined by, for $i = (k-1)n + a$, $k=1,\dots,p$, $a=1,\dots,n$,
$\sigma_{\btheta}(\x)_i = c_{1k}(\x - \x_a ; \btheta)$.  Define
similarly the $np \times 1$ vector $\k_{\btheta}(\x)$ by
$k_{\btheta}(\x)_i = c_{1k}(\x - \x_a ; \btheta) t_{1k}\big(
(\x - \x_a)/ \gamma_n \big)$.

\section{Consistent estimation and asymptotically equal prediction}\label{sec:results}

We first explore four conditions on covariance and taper matrix functions.
The following condition holds for all the most classical models of covariance functions with infinite supports. Note that models with compactly supported covariance functions can be non-differentiable with respect to the covariance parameters, but that tapering is irrelevant anyway in increasing-domain asymptotics when the original covariance functions are already compactly supported.
 
\begin{cond}  \label{cond:ctheta:one:derivative}
For all fixed $\x \in \IR^d$, $k,l=1,\dots,p$, $c_{kl}(\x;\btheta)$ is continuously differentiable with respect to $\btheta$.
There exist constants $A < + \infty$ and $\alpha >0$ so that for all $i =1,\dots,q$, for all $\x \in \IR^d$ and for all $\btheta \in \Theta$,
\begin{equation*} 
\left|  c_{kl}\left( \x ; \btheta \right) \right| \leq \frac{A}{ 1+|\x|^{d+\alpha} }
~ ~ ~ ~
\mbox{and}
~ ~ ~ ~
\left| \frac{\partial}{\partial \theta_i}  c_{kl}\left( \x ; \btheta \right) \right| \leq \frac{A}{ 1+|\x|^{d+\alpha} }.
\end{equation*}
\end{cond}

\begin{cond}  \label{cond:taper:no:rate}
For all $k,l=1,\dots,p$, the taper function $t_{kl}$ is continuous at $\0$ and satisfies $t_{kl}(\0) = 1$ and $|t_{kl}(\x)|\leq 1$ for all $\x \in \IR^d$.
The taper range $\gamma = \gamma_n$ satisfies $\gamma_n \to_{n \to \infty} + \infty$.
\end{cond}

The next condition on a minimal distance between two different observation points is assumed in most domain increasing settings.

\begin{cond}  \label{cond:minimal:distance}
There exists a constant $\Delta >0$ so that for all $n \in \IN^+$ and for all $a \neq b$, $|\x_a - \x_b| \geq \Delta$.
\end{cond}

\begin{cond}  \label{cond:minimal:eigenvalue}
There exists a constant $\delta >0$ so that for all $n \in \IN^+$ and for all $\btheta \in \Theta$, $\lambda_{np}( \bSigma_{\btheta} ) \geq \delta$ and $\lambda_{np}( \K_{\btheta} ) \geq \delta$.
\end{cond}

We expect Condition~\ref{cond:minimal:eigenvalue} to hold in many cases when Condition~\ref{cond:minimal:distance} also holds. For univariate tapering, Condition~\ref{cond:minimal:eigenvalue} would indeed hold under mild assumptions (consider an adaptation of Proposition D.4 in \citealp{bachoc14asymptotic}). Furthermore, when the parametric model incorporates a nugget effect or measurement errors, then Condition~\ref{cond:minimal:eigenvalue} holds provided that the nugget or error variances are lower-bounded uniformly in $\btheta$. The nugget or measurement error case is directly treated by Theorem~\ref{thm:consistent:estimation}; Theorem~\ref{thm:convergence:IMSE} would also be valid for it with a minor change of notation to define the integrated prediction errors (see, e.g., the context of \citealp{bachoc14asymptoticb}).

The next theorem and corollary (the corollary is proved using standard $M$-estimator techniques),  show that if the standard conditions for consistency of the (untapered) ML estimator hold, then the tapering preserves this consistency, as long as $\gamma \to_{n \to \infty} + \infty$.

\begin{thm} \label{thm:consistent:estimation}
Assume that Conditions~\ref{cond:ctheta:one:derivative},~\ref{cond:taper:no:rate},~\ref{cond:minimal:distance}, and~\ref{cond:minimal:eigenvalue} hold.
Then, as $n \to \infty$,
\[
\sup_{\btheta \in \Theta} | L_{\btheta} - \bar{L}_{\btheta} | = o_p(1).
\]
\end{thm}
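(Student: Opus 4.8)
The plan is to split the difference into a log-determinant part and a quadratic-form part,
\[
L_{\btheta} - \bar L_{\btheta} = A_{\btheta} + B_{\btheta},
\qquad
A_{\btheta} = \tfrac{1}{np}\bigl(\log\det\bSigma_{\btheta} - \log\det\K_{\btheta}\bigr),
\quad
B_{\btheta} = \tfrac{1}{np}\,\z\T\bigl(\bSigma_{\btheta}^{-1}-\K_{\btheta}^{-1}\bigr)\z,
\]
and to reduce the whole statement to one deterministic estimate on the perturbation $\D_{\btheta}:=\bSigma_{\btheta}-\K_{\btheta}$, namely that its spectral norm vanishes uniformly in $\btheta$:
\[
\epsilon_n := \sup_{\btheta\in\Theta}\rho_1\bigl(\bSigma_{\btheta}-\K_{\btheta}\bigr)\longrightarrow 0,\qquad n\to\infty.
\]
Once this is available, Condition~\ref{cond:minimal:eigenvalue} turns both $A_{\btheta}$ and $B_{\btheta}$ into routine matrix estimates. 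I expect this uniform spectral-norm bound to be the crux of the argument.

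For the crux I would exploit that $\D_{\btheta}$ is symmetric with entries $D_{\btheta ij}=\sigma_{\btheta ij}\,(1-t_{ij})$, where $t_{ij}=t_{kl}\bigl((\x_a-\x_b)/\gamma_n\bigr)$, so Gershgorin gives $\rho_1(\D_{\btheta})\le\max_i\sum_j|D_{\btheta ij}|$. Fixing the row $i$ (location $\x_a$, component $k$), I would split the column sum at a radius $R$. On the far part $|\x_a-\x_b|>R$ I bound $|1-t_{ij}|\le 2$ and invoke the decay estimate of Condition~\ref{cond:ctheta:one:derivative}, so the contribution is at most $2\sum_{b:\,|\x_a-\x_b|>R}\sum_{l}A/(1+|\x_a-\x_b|^{d+\alpha})$; the packing estimate forced by Condition~\ref{cond:minimal:distance} (each shell of radius $\sim r$ contains $O((r/\Delta)^{d-1})$ points) makes this the tail of a convergent series, hence $<\varepsilon$ uniformly in $n$, $a$, $k$, and $\btheta$ once $R$ is large. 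On the near part $|\x_a-\x_b|\le R$ there are at most $N(R)=O((R/\Delta)^d)$ terms, each bounded by $A\cdot\sup_{|\u|\le R/\gamma_n}|1-t_{kl}(\u)|$; since each $t_{kl}$ is continuous at $\0$ with $t_{kl}(\0)=1$ and $\gamma_n\to\infty$ (Condition~\ref{cond:taper:no:rate}), this supremum tends to $0$, so for fixed $R$ the near part vanishes as $n\to\infty$. Combining the two parts gives $\epsilon_n\to0$. \textbf{Making the tail bound uniform in $n$ and $\btheta$, and handling the $n$-dependence of the near-field distances through the taper modulus of continuity, is the main obstacle.}

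For $A_{\btheta}$ I would use $\log\det\bSigma_{\btheta}-\log\det\K_{\btheta}=\int_0^1 \tr\bigl(((1-t)\K_{\btheta}+t\bSigma_{\btheta})^{-1}\D_{\btheta}\bigr)\,\d t$. Because the smallest eigenvalue is concave, $(1-t)\K_{\btheta}+t\bSigma_{\btheta}$ has smallest eigenvalue $\ge\delta$ by Condition~\ref{cond:minimal:eigenvalue}, so its inverse $\bfM$ satisfies $\rho_1(\bfM)\le1/\delta$; the singular-value inequality $|\tr(\bfM\D_{\btheta})|\le\rho_1(\bfM)\sum_i\rho_i(\D_{\btheta})\le\rho_1(\bfM)\,np\,\rho_1(\D_{\btheta})$ then yields $|A_{\btheta}|\le\rho_1(\D_{\btheta})/\delta\le\epsilon_n/\delta$, the factor $np$ cancelling the prefactor, so $\sup_{\btheta}|A_{\btheta}|=o(1)$ deterministically. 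For $B_{\btheta}$ I would write $\bSigma_{\btheta}^{-1}-\K_{\btheta}^{-1}=\bSigma_{\btheta}^{-1}(\K_{\btheta}-\bSigma_{\btheta})\K_{\btheta}^{-1}=-\bSigma_{\btheta}^{-1}\D_{\btheta}\K_{\btheta}^{-1}$, whose spectral norm is $\le\rho_1(\D_{\btheta})/\delta^2\le\epsilon_n/\delta^2$ again by Condition~\ref{cond:minimal:eigenvalue}; hence $|B_{\btheta}|\le(\epsilon_n/\delta^2)\,\|\z\|^2/(np)$. Finally $\|\z\|^2/(np)=O_p(1)$, since $\E\|\z\|^2=\tr\bSigma_{\btheta_0}\le npA$ because the diagonal entries $c_{kk}(\0;\btheta_0)$ are bounded by $A$, so Markov's inequality gives boundedness in probability. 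As the last two factors are free of $\btheta$, $\sup_{\btheta}|B_{\btheta}|\le(\epsilon_n/\delta^2)\,\|\z\|^2/(np)=o(1)\,O_p(1)=o_p(1)$. Adding the two bounds gives $\sup_{\btheta\in\Theta}|L_{\btheta}-\bar L_{\btheta}|=o_p(1)$, as claimed.
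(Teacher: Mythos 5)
Your proof is correct, and while it starts from the same decomposition as the paper (log-determinant term plus quadratic-form term) and your crux estimate $\sup_{\btheta}\rho_1(\bSigma_{\btheta}-\K_{\btheta})\to 0$ is exactly the paper's Lemma~\ref{lem:vanishing:max:singular:value} (proved there, as by you, via Gershgorin together with the far/near split at a radius $R$ of Lemmas~\ref{lem:vanishing:rest} and~\ref{lem:taper:convergence:sum}), the downstream machinery is genuinely different. The paper first reduces the uniform statement to a fixed $\btheta$ using compactness of $\Theta$ and a uniform bound on $\partial L_{\btheta}/\partial\theta_i$ (Lemma~\ref{lem:bounded:derivative}); it then controls the log-determinant term through the eigenvalues of $\K_{\btheta}^{-1/2}\bSigma_{\btheta}\K_{\btheta}^{-1/2}$, the inequality $|\log\lambda|\leq C|1-\lambda|$, and a Cauchy--Schwarz step leading to the Frobenius-norm quantity $\frac{1}{np}\|\bSigma_{\btheta}-\K_{\btheta}\|_F^2$ (Lemma~\ref{lem:difference:frobenius:norm}), and it handles the quadratic term stochastically, computing its mean and its variance via the Gaussian identity $\var(\z\T\A\z)=2\tr\bigl((\bSigma_{\btheta_0}\A)^2\bigr)$ --- an argument that is intrinsically pointwise in $\btheta$, which is why the derivative lemma is needed. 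You instead use the spectral-norm convergence directly: the integral representation $\log\det\bSigma_{\btheta}-\log\det\K_{\btheta}=\int_0^1 \tr\bigl(((1-t)\K_{\btheta}+t\bSigma_{\btheta})^{-1}(\bSigma_{\btheta}-\K_{\btheta})\bigr)\d t$ with concavity of $\lambda_{\min}$ gives a deterministic bound $\epsilon_n/\delta$ that is already uniform in $\btheta$, and the quadratic term is bounded almost surely by $(\epsilon_n/\delta^2)\,\|\z\|^2/(np)$ with $\|\z\|^2/(np)=O_p(1)$ by Markov. What your route buys: uniformity over $\Theta$ comes for free, so you never need Lemma~\ref{lem:bounded:derivative} (and hence not the differentiability part of Condition~\ref{cond:ctheta:one:derivative} for this theorem), and you use Gaussianity of $\z$ only through second moments, so the theorem as you prove it holds for any process with the stated covariance structure; what the paper's route buys is that it works from the weaker Frobenius-type input $\frac{1}{np}\|\bSigma_{\btheta}-\K_{\btheta}\|_F^2\to 0$ for the log-determinant term, which is the natural quantity in related increasing-domain arguments, at the cost of the extra $M$-estimator reduction and the Gaussian variance computation.
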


\begin{cor} \label{cor:consistent:estimation}
Consider the same setting as in Theorem~\ref{thm:consistent:estimation}.
Assume that for all $\kappa >0$ there exists $\epsilon > 0$ so that
\[
\inf_{ | \btheta - \btheta_0 | \geq \kappa} L_{\btheta} - L_{\btheta_0} \geq \epsilon + o_p(1),
\]
where the $o_p(1)$ may depend on $\epsilon$ and $\kappa$ and goes to $0$ in probability as $n \to \infty$.
Then, as $n \to \infty$,
\[
\widehat{\btheta}_{\ML} \to_p \btheta_0
~ ~ ~ ~ ~ ~
\mbox{and}
~ ~ ~ ~ ~ ~
\widehat{\btheta}_{t\ML} \to_p \btheta_0.
\]
\end{cor}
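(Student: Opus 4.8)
The plan is to apply the standard argmin-consistency argument twice: once to $\widehat{\btheta}_{\ML} = \argmin_{\btheta} L_{\btheta}$ using the stated identifiability assumption directly, and once to $\widehat{\btheta}_{t\ML} = \argmin_{\btheta} \bar{L}_{\btheta}$ after transporting that identifiability from $L_{\btheta}$ to $\bar{L}_{\btheta}$ by means of the uniform bound of Theorem~\ref{thm:consistent:estimation}. Existence of both argmins is guaranteed because $\Theta$ is compact and the maps $\btheta \mapsto L_{\btheta}$ and $\btheta \mapsto \bar{L}_{\btheta}$ are continuous; the latter follows from the continuity in $\btheta$ of $\bSigma_{\btheta}$ and $\K_{\btheta}$ (Condition~\ref{cond:ctheta:one:derivative}) together with the uniform lower bound on their eigenvalues (Condition~\ref{cond:minimal:eigenvalue}).

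For $\widehat{\btheta}_{\ML}$, I would fix $\kappa > 0$ and take the associated $\epsilon > 0$ together with the remainder $S_n = o_p(1)$ supplied by the hypothesis, so that $\inf_{|\btheta - \btheta_0| \ge \kappa}(L_{\btheta} - L_{\btheta_0}) \ge \epsilon + S_n$. On the event $\{ |\widehat{\btheta}_{\ML} - \btheta_0| \ge \kappa \}$ the minimizing property $L_{\widehat{\btheta}_{\ML}} \le L_{\btheta_0}$ gives
\[
0 \ge L_{\widehat{\btheta}_{\ML}} - L_{\btheta_0} \ge \inf_{|\btheta - \btheta_0| \ge \kappa}(L_{\btheta} - L_{\btheta_0}) \ge \epsilon + S_n,
\]
so that $S_n \le -\epsilon$ on this event. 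Hence $P(|\widehat{\btheta}_{\ML} - \btheta_0| \ge \kappa) \le P(S_n \le -\epsilon) \to 0$, and since $\kappa$ is arbitrary, $\widehat{\btheta}_{\ML} \to_p \btheta_0$.

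For $\widehat{\btheta}_{t\ML}$, the same scheme applies once identifiability is available for $\bar{L}_{\btheta}$. I would obtain it from the decomposition $\bar{L}_{\btheta} - \bar{L}_{\btheta_0} = (L_{\btheta} - L_{\btheta_0}) + (\bar{L}_{\btheta} - L_{\btheta}) + (L_{\btheta_0} - \bar{L}_{\btheta_0})$, bounding the last two terms in absolute value by $2 \sup_{\btheta \in \Theta} |L_{\btheta} - \bar{L}_{\btheta}|$. Theorem~\ref{thm:consistent:estimation} makes this supremum $o_p(1)$, whence $\inf_{|\btheta - \btheta_0| \ge \kappa}(\bar{L}_{\btheta} - \bar{L}_{\btheta_0}) \ge \epsilon + o_p(1)$, and repeating the argmin argument verbatim with $\bar{L}_{\btheta}$ in place of $L_{\btheta}$ yields $\widehat{\btheta}_{t\ML} \to_p \btheta_0$.

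I do not anticipate a genuine obstacle here: this is the textbook $M$-estimator consistency argument, as the statement itself indicates. The only points needing care are bookkeeping matters — the remainder $o_p(1)$ may depend on the fixed pair $(\epsilon, \kappa)$, which is harmless since the contradiction is derived separately for each fixed $\kappa$ — and the observation that the passage from $L_{\btheta}$ to $\bar{L}_{\btheta}$ relies essentially on the \emph{uniform} (in $\btheta$) control provided by Theorem~\ref{thm:consistent:estimation}; pointwise convergence alone would not suffice to transport the identifiability inequality.
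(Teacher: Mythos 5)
Your proof is correct and is exactly the argument the paper intends: the paper gives no explicit proof of Corollary~\ref{cor:consistent:estimation}, stating only that it follows by ``standard $M$-estimator techniques,'' and your write-up supplies precisely those details --- the argmin contradiction for $\widehat{\btheta}_{\ML}$, and the transport of the identifiability inequality to $\bar{L}_{\btheta}$ via the uniform bound $\sup_{\btheta \in \Theta}|L_{\btheta} - \bar{L}_{\btheta}| = o_p(1)$ from Theorem~\ref{thm:consistent:estimation}. Your closing remark that uniformity in $\btheta$ is essential here is also exactly why the theorem is stated with the supremum.
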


Theorem~\ref{thm:consistent:estimation} and Corollary~\ref{cor:consistent:estimation} highlight the important difference between one-taper and two-taper ML in terms of asymptotics. One-taper approximation with fixed range $\gamma$ and independent of $n$ boils down to an incorrectly specified covariance model. Thus, with fixed~$\gamma$, the tapered ML estimator would generally be inconsistent and would converge to the asymptotic minimizer of a Kullback--Leibler divergence (for the univariate case, see the discussion in \citealp{Kauf:Sche:Nych:08}, and also \citealp{watkins90maximum}, or \citealp{bachoc14asymptoticb}). Hence, assuming $\gamma \to \infty$ is necessary to prove consistency, which we do here. Note that, nevertheless, no rate needs to be specified.
These facts also entail an exposition benefit for our paper: we simply have to show that the one-taper approximation does not damage the untapered ML estimator. The question of the consistency of this latter estimator can be treated in separate references, like \cite{mardia84maximum} or \cite{bachoc14asymptotic} for the univariate case. Especially, identifiability assumptions for the covariance model need not be discussed in our paper. 

On the other hand, for the two-taper ML, consistency can be proved for a fixed $\gamma$, provided notably that the model of tapered covariance and cross-covariance functions is identifiable. (In particular, two different covariance parameters yield two different sets of tapered covariance and cross-covariance functions.)
We refer to \cite{Shab:Rupp:12} for a corresponding proof in the univariate case. (Actually, we believe that a global identifiability condition might be missing in \cite{Shab:Rupp:12}, stronger than assumption (B) in this reference, for it is not clear how to go from (S.29) to (S.30) in its supplementary material.)
Hence, the difference between the asymptotic analysis of the untapered and two-taper ML estimators is more pronounced, since the latter estimator is a quasi-likelihood estimator in a covariance model different from the original one. This is why, in \cite{Shab:Rupp:12}, many assumptions, notably on identifiability, are restated independently of the untapered ML estimator.

These asymptotic considerations also correspond to practical aspects of the comparison between one- and two-taper equations. The latter can be employed with a smaller range $\gamma$ than the former, which is beneficial, but on the other hand, requires the full inverse of a sparse matrix.

The following theorem shows that tapering has no asymptotic effect on prediction, uniformly in the covariance parameter $\btheta$. (Note that for prediction, there is no distinction between one and two-taper approximation.)

\begin{thm} \label{thm:convergence:IMSE}
Assume that Conditions~\ref{cond:ctheta:one:derivative},~\ref{cond:taper:no:rate},~\ref{cond:minimal:distance}, and~\ref{cond:minimal:eigenvalue} hold. Let $(\x_{\new,n})_{n \in \IN^+}$ be a fixed sequence in $\IR^d$.
Then, as $n \to \infty$,
\begin{equation} \label{eq:convergence:fixed:MSE}
\sup_{\btheta\in\Theta}
\left| 
 \left[ \bsigma_{\btheta}(\x_{\new,n}) \T \bSigma_{\btheta}^{-1} \z - Z_1(\x_{\new,n}) \right]^2 
-
 \left[ \k_{\btheta}(\x_{\new,n}) \T \K_{\btheta}^{-1} \z - Z_1(\x_{\new,n}) \right]^2  
\right| = o_p(1).
\end{equation}

Assume furthermore that for any fixed $\btheta$, $k$ and $l$, the functions $c_{kl}(\x;\btheta)$ and $t_{kl}(\x)$ are continuous.
Let $\cD_n$ be a sequence of measurable subsets of $\IR^d$ with positive Lebesgue measures and let $f_n(\x)$ be a sequence of continuous probability density functions on $\cD_n$.
Then, as $n \to \infty$,
\begin{equation} \label{eq:convergence:IMSE}
\sup_{\btheta\in\Theta}
\left| 
\int_{\cD_n} \left[ \bsigma_{\btheta}(\x) \T \bSigma_{\btheta}^{-1} \z - Z_1(\x) \right]^2  f_n(\x) d \x
-
\int_{\cD_n} \left[ \k_{\btheta}(\x) \T \K_{\btheta}^{-1} \z - Z_1(\x) \right]^2  f_n(\x) d \x
\right| = o_p(1).
\end{equation}
\end{thm}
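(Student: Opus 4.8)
The plan is to handle both displays through the algebraic identity $[P_\btheta - Z_1]^2 - [\bar P_\btheta - Z_1]^2 = (P_\btheta - \bar P_\btheta)(P_\btheta + \bar P_\btheta - 2 Z_1)$, where I abbreviate the untapered and tapered predictors by $P_\btheta(\x) = \bsigma_\btheta(\x)\T\bSigma_\btheta^{-1}\z$ and $\bar P_\btheta(\x) = \k_\btheta(\x)\T\K_\btheta^{-1}\z$. For \eqref{eq:convergence:fixed:MSE} it then suffices to show $\sup_\btheta|P_\btheta(\x) - \bar P_\btheta(\x)| = o_p(1)$ while $\sup_\btheta|P_\btheta(\x) + \bar P_\btheta(\x) - 2Z_1(\x)| = O_p(1)$, all evaluated at $\x = \x_{\new,n}$. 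Using $\K_\btheta^{-1} - \bSigma_\btheta^{-1} = \K_\btheta^{-1}(\bSigma_\btheta - \K_\btheta)\bSigma_\btheta^{-1}$, I would write the difference as a single centered linear form $P_\btheta(\x) - \bar P_\btheta(\x) = \w_\btheta(\x)\T\z$ with
\[
\w_\btheta(\x) = \bSigma_\btheta^{-1}\bigl(\bsigma_\btheta(\x) - \k_\btheta(\x)\bigr) + \K_\btheta^{-1}\bigl(\K_\btheta - \bSigma_\btheta\bigr)\bSigma_\btheta^{-1}\k_\btheta(\x),
\]
so that everything reduces to controlling the two ``tapering defects'' $\bsigma_\btheta(\x) - \k_\btheta(\x)$ and $\K_\btheta - \bSigma_\btheta$ together with the inverses.

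The next step is a set of geometric estimates. By Condition~\ref{cond:minimal:distance} the number of observation points in any ball of radius $R$ grows like $R^d$, so with Condition~\ref{cond:ctheta:one:derivative} the absolute row sums $\sum_b A/(1+|\x_a-\x_b|^{d+\alpha})$ are bounded by a convergent lattice sum uniformly in $a$ and $n$; this gives $\sup_\btheta\rho_1(\bSigma_\btheta)\le C$, $\sup_\btheta\rho_1(\K_\btheta)\le C$ (since $|t_{kl}|\le 1$ only shrinks entries) and the same for the $\btheta$-derivatives, while Condition~\ref{cond:minimal:eigenvalue} yields $\rho_1(\bSigma_\btheta^{-1}),\rho_1(\K_\btheta^{-1})\le 1/\delta$. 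For the defects, fix $\eta>0$ and, using continuity of $t_{kl}$ at $\0$ with $t_{kl}(\0)=1$ (Condition~\ref{cond:taper:no:rate}), pick $\epsilon>0$ with $|1-t_{kl}(\h)|\le\eta$ whenever $\|\h\|\le\epsilon$. Splitting each sum into the near region $\|\x-\x_a\|\le\epsilon\gamma_n$ (where $|1-t_{kl}((\x-\x_a)/\gamma_n)|\le\eta$) and the far region (where the covariance is bounded by $A/(1+|\x-\x_a|^{d+\alpha})$ and the tail vanishes as $\gamma_n\to\infty$) gives, via the same lattice counting, $\sup_{\x,\btheta}\|\bsigma_\btheta(\x)-\k_\btheta(\x)\|^2\le C\eta^2+o(1)$ and $\sup_\btheta\rho_1(\K_\btheta-\bSigma_\btheta)\le C\eta+o(1)$ (spectral norm via maximal absolute row sum). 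Since $\eta$ is arbitrary and $\gamma_n\to\infty$, both defects, and by an identical argument their $\btheta$-derivatives, tend to $0$ uniformly in $\x$ and $\btheta$; combining, $\sup_{\x,\btheta}\|\w_\btheta(\x)\|\to 0$ and $\sup_{\x,\btheta}\|\partial_{\theta_i}\w_\btheta(\x)\|\to 0$.

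The crux is then probabilistic. For fixed $\btheta$ and $\x$ the quantity $\w_\btheta(\x)\T\z$ is a centered Gaussian with variance $\w_\btheta(\x)\T\bSigma_{\btheta_0}\w_\btheta(\x)\le\rho_1(\bSigma_{\btheta_0})\|\w_\btheta(\x)\|^2=o(1)$, hence $o_p(1)$ pointwise; the whole reason for writing the difference as one linear form is that the naive Cauchy--Schwarz bound $\|\w_\btheta(\x)\|\,\|\z\|$ is useless, because $\|\z\|=O_p(\sqrt{np})$, whereas the Gaussian variance exploits the cancellation. The main obstacle is upgrading pointwise smallness to a supremum over the compact set $\Theta$. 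Here I would combine the Gaussian tails (all $L^r$-moments of $\w_\btheta(\x)\T\z$ and of $\partial_{\theta_i}\w_\btheta(\x)\T\z$ are controlled by powers of their variances, hence by the uniform bounds on $\|\w_\btheta\|$ and $\|\partial_{\theta_i}\w_\btheta\|$) with the differentiability from Condition~\ref{cond:ctheta:one:derivative}, via a Sobolev embedding $W^{1,r}(\Theta)\hookrightarrow C^0(\Theta)$ with $r>q$, to obtain $\E[\sup_\btheta|P_\btheta(\x)-\bar P_\btheta(\x)|^r]\to 0$ and thus a uniform $o_p(1)$. The same variance-plus-smoothness argument, now with uniformly bounded (rather than vanishing) norms, shows $\sup_\btheta|P_\btheta(\x)|$ and $\sup_\btheta|\bar P_\btheta(\x)|$ are $O_p(1)$; with $Z_1(\x_{\new,n})=O_p(1)$ this controls the second factor and proves \eqref{eq:convergence:fixed:MSE}.

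For the integrated statement \eqref{eq:convergence:IMSE} I would apply Cauchy--Schwarz in $L^2(f_n\,d\x)$ to the integral of $(P_\btheta-\bar P_\btheta)(P_\btheta+\bar P_\btheta-2Z_1)$, reducing matters to (a) $\int_{\cD_n}(P_\btheta(\x)-\bar P_\btheta(\x))^2 f_n(\x)\,d\x=o_p(1)$ and (b) $\int_{\cD_n}(P_\btheta(\x)+\bar P_\btheta(\x)-2Z_1(\x))^2 f_n(\x)\,d\x=O_p(1)$, both uniformly in $\btheta$. Factor (a) equals the quadratic form $\z\T\bfM_\btheta\z$ with $\bfM_\btheta=\int_{\cD_n}\w_\btheta(\x)\w_\btheta(\x)\T f_n(\x)\,d\x$, positive semidefinite with $\tr(\bfM_\btheta)\le\sup_{\x,\btheta}\|\w_\btheta(\x)\|^2\to 0$ (as $\int f_n=1$); hence $\E[\z\T\bfM_\btheta\z]=\tr(\bfM_\btheta\bSigma_{\btheta_0})\le\rho_1(\bSigma_{\btheta_0})\tr(\bfM_\btheta)\to 0$, and a Hanson--Wright-type moment bound (the relevant Frobenius norm is dominated by the trace) with the same Sobolev argument in $\btheta$ gives $\sup_\btheta\z\T\bfM_\btheta\z=o_p(1)$. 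Factor (b) is a sum of integrated prediction mean-squared errors whose expectations are uniformly bounded by the operator-norm estimates above, hence $O_p(1)$ uniformly by the smoothness-in-$\btheta$ argument. I expect the one genuinely delicate point throughout to be this uniform-in-$\btheta$ control, where the $\sqrt{np}$ growth of $\|\z\|$ forbids operator-norm bounds and forces the combination of Gaussian trace bounds with the differentiability of the model in $\btheta$.
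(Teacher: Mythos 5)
Your proposal is correct and takes essentially the same route as the paper's own proof: the same $a^2-b^2$ factorization followed by Cauchy--Schwarz to split into an $O_p(1)$ and an $o_p(1)$ factor, the same resolvent identity $\bSigma_{\btheta}^{-1}-\K_{\btheta}^{-1}=\K_{\btheta}^{-1}(\K_{\btheta}-\bSigma_{\btheta})\bSigma_{\btheta}^{-1}$ together with lattice-counting row-sum bounds, Gershgorin-type spectral estimates and the near/far taper-defect argument (the paper's Lemmas~3--8), and crucially the same Sobolev-embedding-plus-Gaussian-moment device to upgrade pointwise variance bounds to uniformity over the compact $\Theta$. The only differences are organizational rather than substantive --- you consolidate the predictor difference into a single vector $\w_{\btheta}(\x)$ and treat the integrated defect as a quadratic form $\z\T\bfM_{\btheta}\z$ with the supremum kept outside the integral, whereas the paper telescopes into generic terms and bounds $\int_{\cD_n}\sup_{\btheta}(\cdot)^2 f_n(\x)\d\x$ via Fubini and pointwise-in-$\x$ moment bounds.
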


In \eqref{eq:convergence:IMSE}, we assume continuity of the cross covariance, covariance and taper functions, and of $f_n(\x)$ in order to define integrals in the $L^2$ sense.
When $f_n(\x)$ is constant on $\cD_n$, Theorem~\ref{thm:convergence:IMSE} shows that tapering does not damage the mean integrated square prediction error over any sequence of prediction domains $\cD_n$.
Furthermore, in \eqref{eq:convergence:fixed:MSE} and \eqref{eq:convergence:IMSE}, the terms in the differences are typically bounded away from zero in probability, because of Condition~\ref{cond:minimal:distance} (consider for example Equation (10) in Proposition 5.2 of \citealp{bachoc14asymptotic}). (This would not hold only in degenerate cases when $\x_{\new,n}$ becomes arbitrarily close to an observation point or where $f_{n}(\x)$ concentrates around an observation point.) Hence, also the ratio of (integrated) mean square prediction errors, between tapered and untapered predictions, converges to unity in general. Finally, because of the supremum over $\btheta$ in \eqref{eq:convergence:fixed:MSE} and \eqref{eq:convergence:IMSE}, Theorem~\ref{thm:convergence:IMSE} implies that the difference of tapered and untapered prediction errors goes to zero also when the predictions are obtained from any common estimator~$\widehat{\btheta}$.

\bigskip

{\bf Remark:} The condition $t_{kl}(\boldsymbol{0}) = 1$ in Condition~\ref{cond:taper:no:rate} is necessary for Theorem~\ref{thm:consistent:estimation}. Indeed, it is typically needed in order to guarantee that $1/(np)  \| \bSigma_{\btheta} - \K_{\btheta} \|^2_F$ goes to zero. The latter is necessary for Theorem~\ref{thm:consistent:estimation}, as can be shown from the arguments in the proof of Proposition~3.1 in \cite{bachoc14asymptotic}. The  condition  $t_{kl}(\boldsymbol{0}) = 1$ should also be needed for Theorem~\ref{thm:convergence:IMSE}, as is suggested by the second offline equation in Proposition 5.1 in \cite{bachoc14asymptotic}.
 
\section{Simulations and illustrations}\label{sec:ill}

We now evaluate the finite sample performance of multivariate
tapering with simulations.  We consider a bivariate Gaussian
isotropic process with Mat\'ern type direct and cross-covariances
\begin{equation}
  c_{kl}(\x;\btheta)=  \frac{\sigma_{kl}^2}{2^{\nu_{kl}-1}\Gamma(\nu_{kl})}
  ( \|\x\| /\rho_{kl})^{\nu_{kl}}\cK_{\nu_{kl}}(\|\x\|/\rho_{kl}), \qquad k,l=1,2
  \label{eq:mat.cov}
\end{equation}
where $\Gamma$ is the Gamma function and $\cK_\nu$ is the modified Bessel
function of the second kind of order $\nu$ \citep{Abra:Steg:70}.  To
ensure positive definiteness, constraints on $\{
\sigma_{kl},\,\rho_{kl},\,\nu_{kl}, k,l=1,2 \}$ have to be imposed,
see \cite{Gnei:Klei:Schl:10}.  We use two different covariance models:
\begin{enumerate}[label={(\Alph*)}]
\item\label{A} ranges:~~~~~~~ $\rho_{11}=5$,  $\rho_{12}=3$, $\rho_{22}=4$\\
  sills:~~~~~~~~~~ $\sigma_{11}=1$,  $\sigma_{12}=.6$, $\sigma_{22}=1$ \\
   smoothness: $\nu_{11}=\nu_{12}=\nu_{22}=1/2$
\item\label{B}  ranges:~~~~~~~  $\rho_{11}=3$,  $\rho_{12}=3$, $\rho_{22}=4$\\
  sills:~~~~~~~~~~   $\sigma_{11}=1$,  $\sigma_{12}=.7$, $\sigma_{22}=1$ \\
   smoothness: $\nu_{11}=3/2$,  $\nu_{12}=1$, $\nu_{22}=1/2$
\end{enumerate}
The smoothness parameters will not be estimated and are fixed. Hence,
$\btheta=(\rho_{11},\rho_{12},\rho_{22},\sigma_{11},$
$\sigma_{12},\sigma_{22})\T$ and $q=6$.  The Mat\'ern covariance
functions satisfy Condition~\ref{cond:ctheta:one:derivative}.

\begin{figure}[ht]
  \centering
  \includegraphics[width=.9\textwidth]{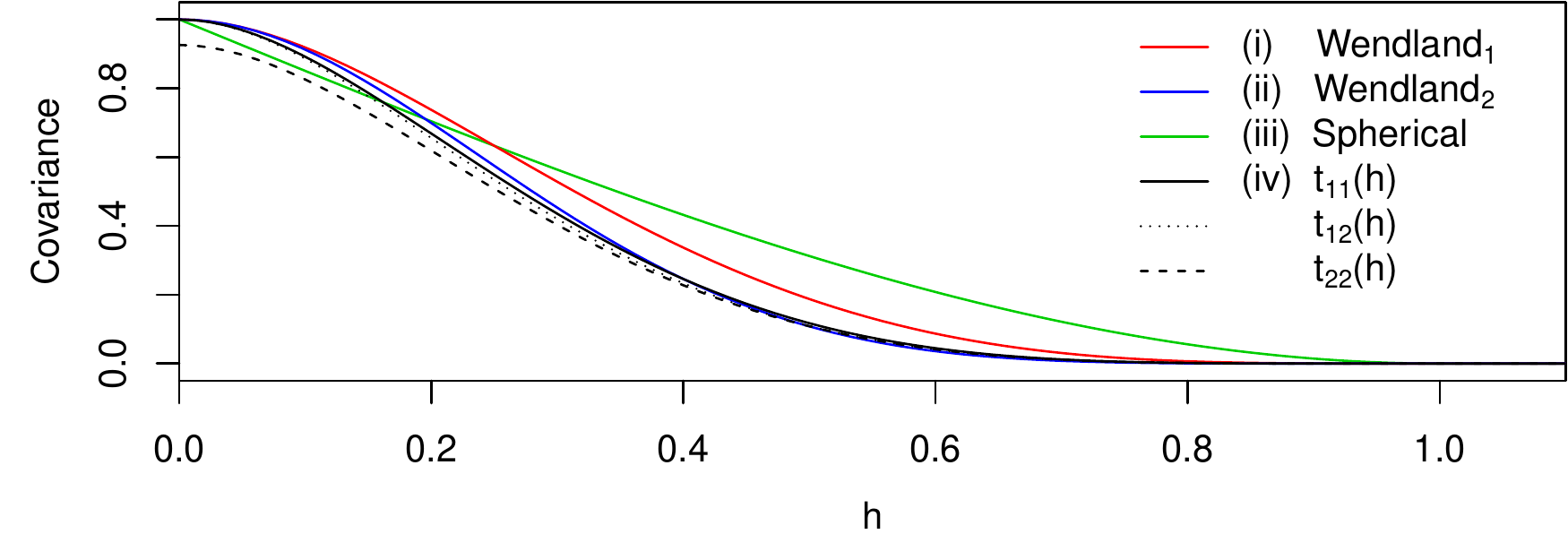}
  \caption{Different taper functions.}\label{covs}
\end{figure}

We consider the following taper matrix functions:
\begin{enumerate}[label={(\roman*)}]
\item\label{i} $t_{kl}(\x)=(1 - \|\x\|)^4_+ (1+4\|\x\|)$, ~$k,l=1,2$. 
\item\label{ii} $t_{kl}(\x)=(1 - \|\x\|)^6_+ (1+6\|\x\|+35\|\x\|^2/3)$, ~$k,l=1,2$. 
\item\label{iii} $t_{kl}(\x)=(1 - \|\x\|)^2_+ (1+\|\x\|/2)$, ~$k,l=1,2$. 
\item\label{iv} $ t_{11} (\x) = (1-\|\x\|)_+^{5}(1+5\|\x\|+\|\x\|^2)$,
  $t_{12} (\x) = t_{21} (\x) = \sqrt{6/7} \,(1-\|\x\|)_+^{5}
  (1+5\|\x\|+\|\x\|^2)$, $t_{22} (\x) = (1-\|\x\|)_+^{5}(1+5\|\x\|)$.
\end{enumerate}

Taper matrix functions~\ref{i}--\ref{iii} satisfy
Condition~\ref{cond:taper:no:rate} and the associated taper matrices
are of the form $\bfT=\1\1\T\otimes t\bigl(\|\x_a-\x_b\|/\gamma\bigr
)$ where the symbol $\otimes$ denotes the Kronecker product and where
$t(\cdot)$ is as indicated above. In the literature these functions
are referred to as Wendland$_1$, Wendland$_2$ and spherical taper
\citep{Wend:95,Furr:Gent:Nych:06}.

Taper matrix function~\ref{iv} is taken from \cite{Deme:13} Corollary
2.2.3, based upon results from Theorem 3 of \cite{Ma:11b} and Lemma 2 of
\cite{Ma:11c}. The validity of this taper matrix function can also be
shown using Theorem~A in \cite{Dale:Porc:Bevi:14} published later.
Taper matrix function~\ref{iv} has $t_{12}(\0)=\sqrt{6/7}<1$ (see
Figure~\ref{covs}) and we investigate its finite sample behavior
although Condition~\ref{cond:taper:no:rate} is violated.  We expect
similar behavior of~\ref{i}, \ref{ii}, and~\ref{iv} as the (direct)
taper functions are very similar.

\begin{figure}[b]
  \centering
  \includegraphics[width=.5\textwidth]{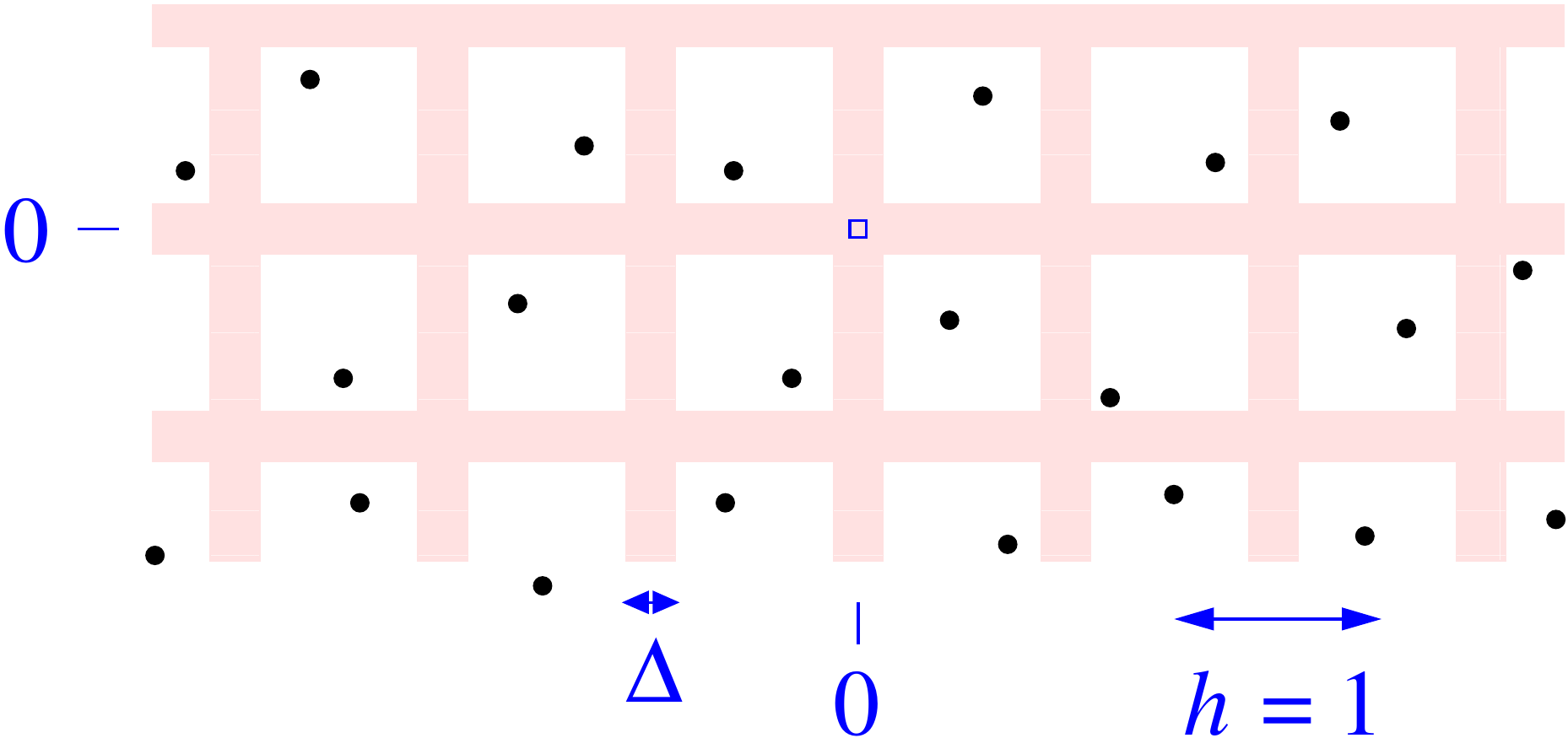}%
  \caption{One set of sampled locations with simulation parameter
    $\Delta=0.2$ and square center spacing $h=1$.}\label{locations}
\end{figure}

We are sampling $4m^2$ locations uniformly in a domain defined by the
union of squares $[(1-\Delta)/2]^2$, centered at
$\{\pm(r-1/2),\pm(s-1/2)\}$, $r,s=1,m$. The parameter $\Delta$
represents the minimum distance between the locations and the case
$\Delta=1$ is a regular grid. Prediction is done at the location
$\x_{\new}=(0,0)\T$ in the center of the
domain. Figure~\ref{locations} illustrates the setup.  We present
results for the two cases $\Delta=0.2,1$ (thus satisfying
Condition~\ref{cond:minimal:distance}) and three grid size parameter
values $m=10,16,25$, i.e., $n=400,1024,2500$ and covariance matrix
sizes $800\times800$, $2048\times 2048$, $5000\times 5000$,
respectively.  Condition~\ref{cond:minimal:eigenvalue} has been
verified numerically.

The next two subsections discuss the results of estimation and
prediction.  Computational details are given in the last subsection.

\subsection{Estimation}
\begin{figure}[!h]
\vspace*{-4mm}
\includegraphics[width=\textwidth]{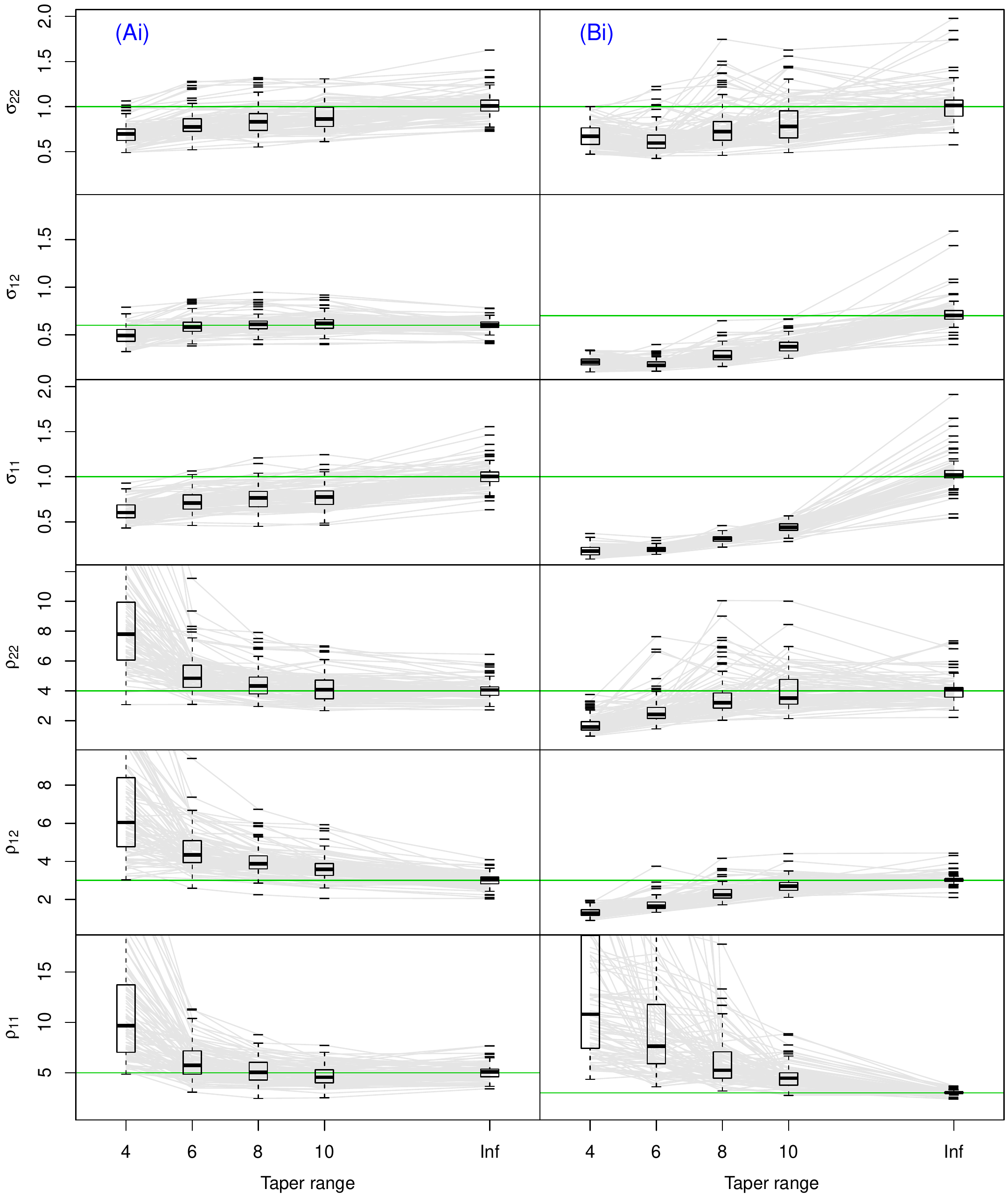}
\caption{Effect of increasing the taper range $\gamma$ on the ML
  estimates. Columns are for the two different covariance models, rows
  are for different parameters (truth is indicated by the horizontal
  green line). 100 realizations have been generated ($\Delta=1$) based
  on $n=400$. Each individual realization is indicated with a gray
  line.}
  \label{fig:taprange}
\end{figure}

We first investigate $\widehat{\btheta}_{t\ML}$ and compare it to
$\btheta_0$ as the taper range increases.  Figure~\ref{fig:taprange}
summarizes the estimates of $\widehat\btheta_{t\ML}$ for equispaced
observations ($\Delta=1$) with $n=400$, taper function~\ref{i}, and
using taper ranges $\gamma=4,6,8,10$ as well as no tapering
($\gamma=$~Inf). As expected, for small taper ranges the results are
biased with range parameters typically overestimated and sill
parameters underestimated. For smoother spatial fields~\ref{B}, the
bias and uncertainties are (slightly) larger.  The estimates of the
sill parameters benefit from a regularizing aspect of tapering and
thus exhibit a consistently smaller variance compared with the untapered
estimates.  This effect of regularizing is surprisingly strong for
model~\ref{B} and parameter $\sigma_{11}$.

Figure~\ref{fig:conv} shows the effect of increasing the number of
locations where we have added the boxplots for $n=1024$ and $n=2500$
(i.e., $m=16$ and $m=25$) to four panels of Figure~\ref{fig:taprange}.
For the untapered estimates, one clearly sees that the uncertainties
in the estimates decrease with increasing $n$. For the tapered
estimates this effect is not as pronounced because of the regularizing
effect of the tapering. As expected, the bias itself is not reduced by
increasing the number of observations while keeping the taper range
fixed. On the other hand, as illustrated in
Corollary~\ref{cor:consistent:estimation}, when going from
$n=400,\gamma=4$ to $n=2500,\gamma=10$, the distribution of the
tapered ML estimates becomes closer to that of the untapered ones.

\begin{figure}[!t]
\includegraphics[width=\textwidth]{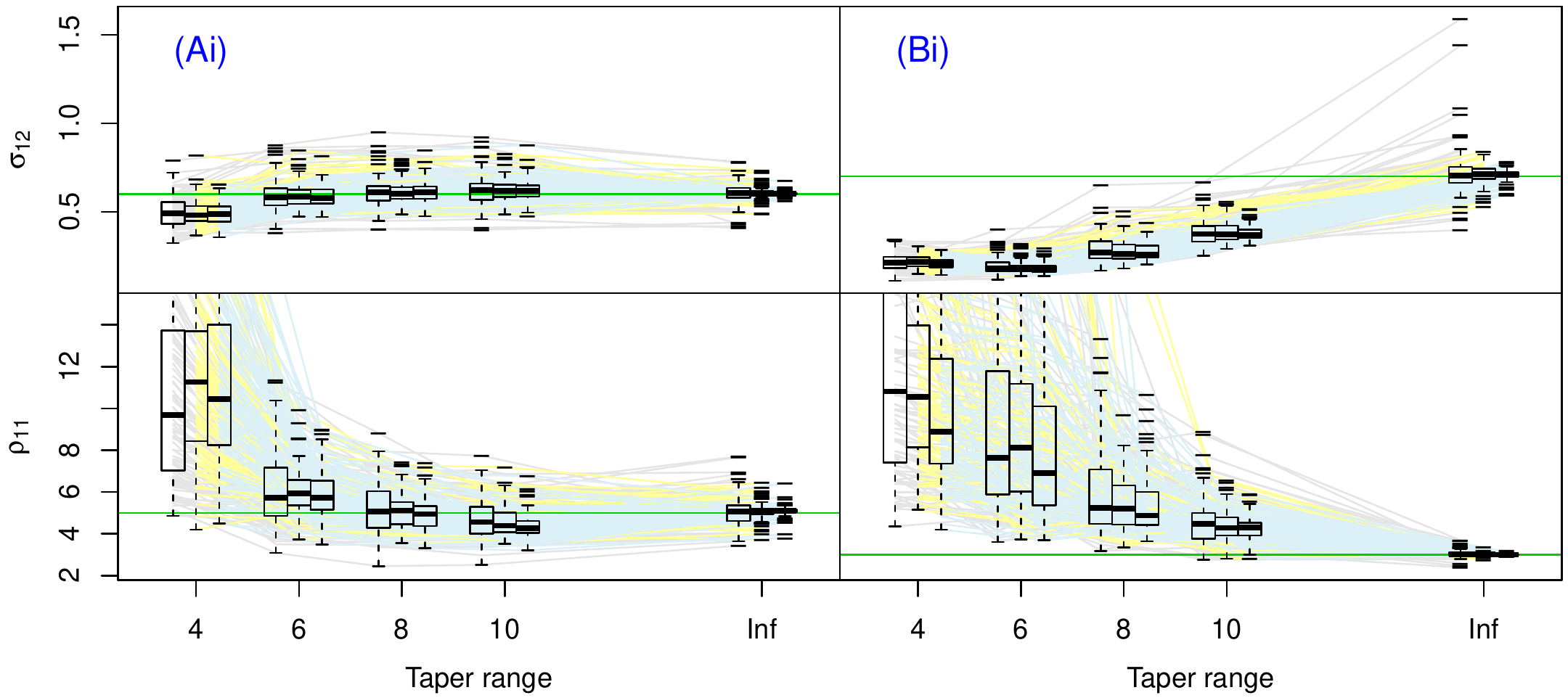}
\caption{Effect of increasing the domain on the ML estimates. The
  boxplots correspond to $n=400$ (gray), 1024 (yellow), 2500 (light
  blue), left to right for each taper range, $\Delta=1$. See also
  Figure~\ref{fig:taprange}.}
  \label{fig:conv}
\end{figure}

\subsection{Prediction}
In practice, prediction is often of prime interest and we primarily investigate
the effect of tapering on the prediction of the first process $Z_1$ at the
unobserved location $\x_{\new}=(0,0)\T$. As parameter values we use
$\btheta_0$ and $\widehat\btheta_{t\ML}$ for different taper ranges
$\gamma$.  

\begin{figure}[ht]
  \centering
\includegraphics[width=1.01\textwidth]{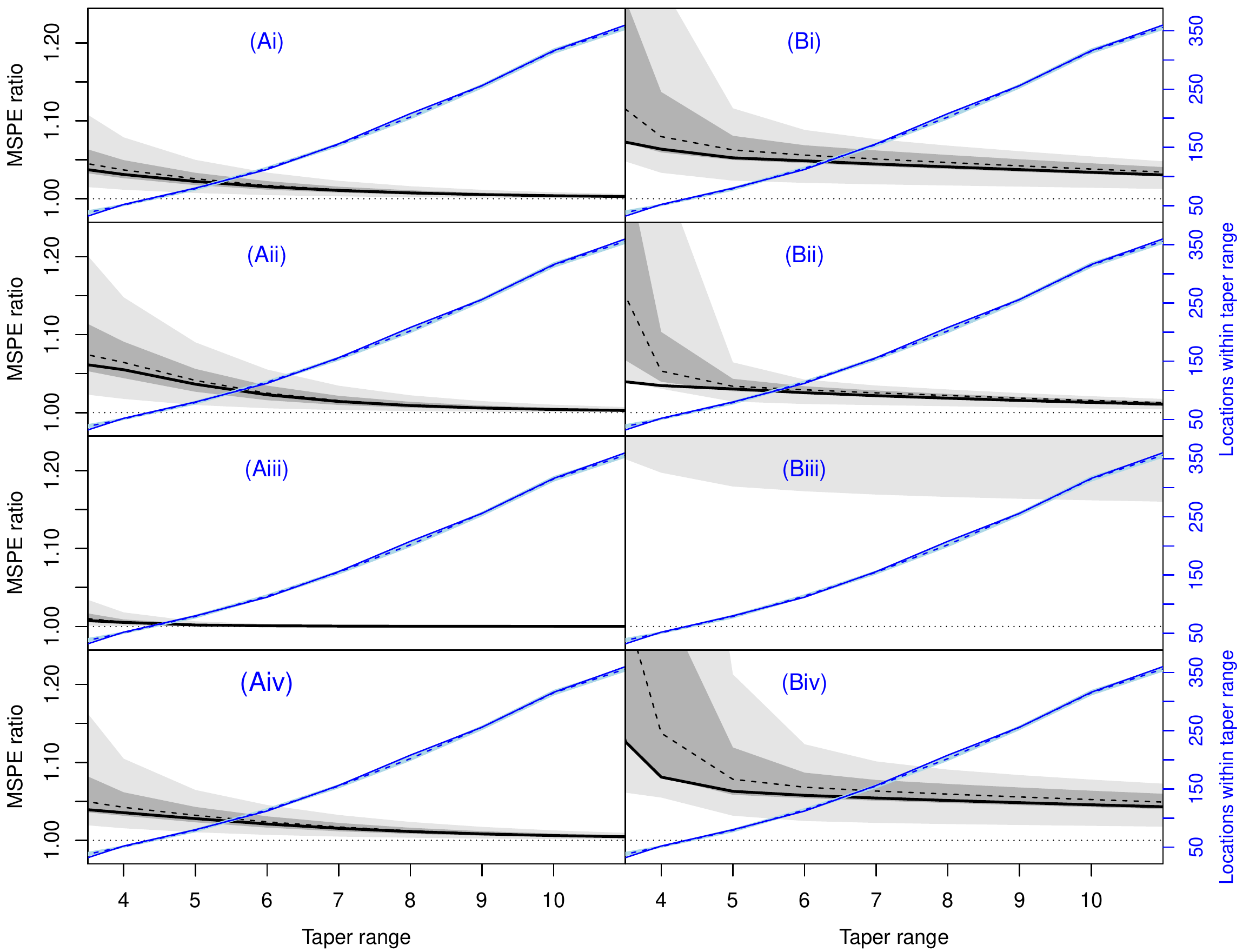}
\caption{Ratios of the tapered to the untapered MSPEs for $n=400$ using
  $\btheta_0$. The solid line represents MSPE ratios for equispaced
  locations ($\Delta=1$), the dashed line shows the median MSPE ratios
  from 100 simulations with random locations with $\Delta=0.2$ (gray
  and light gray are pointwise 50 and 95 percentiles). The blue lines
  indicate the number of points within the taper range (mean solid,
  median dashed and light blue pointwise 95 percentiles). }
  \label{fig:msper}
\end{figure}

In Figure~\ref{fig:msper} we display the ratio of the tapered to the
untapered mean squared prediction errors (MSPEs) using
$\btheta_0$. For Model~\ref{A}, the loss of efficiency is in general
of the order of a few percent (the 95\% pointwise range is below 1.08
for $\gamma\geq5$). For smoother processes, the taper range needs to
be increased in order to maintain the same efficiency. This is in sync
with infill-asymptotic results (see, e.g., Figure~3 of
\citealp{Furr:Gent:Nych:06}).  There is little difference between the
Wendland$_1$ and Wendland$_2$ tapers. Overall, the former having in
general a slightly smaller MSPE.

The third row of Figure~\ref{fig:msper} illustrates why it is
prohibitive to use tapers that are linear at the origin. While the
spherical taper has no influence on the screening effect
\citep{Stei:02:SEK} of the exponential Model~\ref{A} (left panel) it
completely breaks down for smoother fields (right panel).

Figure~\ref{fig:msper} also links the taper range with the number of
observations within the taper range. The MSPE ratios suggest that
tapering with more than 100 locations within the taper range is hardly
worth the effort.

In Figure~\ref{fig:msper}, we distinguish a small loss of efficiency
when using taper function~\ref{iv} compared with \ref{i}
and~\ref{ii}. This can be explained by the fact that the taper
function~\ref{iv} does not satisfy Condition~\ref{cond:taper:no:rate}
(as $t_{12}(\0)<1$). Nevertheless, this loss is far less pronounced
than when using taper function~\ref{iii} for model~\ref{B}.

For very small taper ranges, the MSPE ratios shown in
Figure~\ref{fig:msper} seem large. However, presented in terms of
differences, the effect of tapering is hardly noticeable. For example,
for the setting~(Ai) with $n=400$, the MSPEs are 0.1155 0.1101 0.1098
for $\gamma=3,11,\infty$, respectively (see also red line in the left
panel of Figure~\ref{fig:predicitons}).

\begin{figure}[ht]
  \includegraphics[width=.46\textwidth]{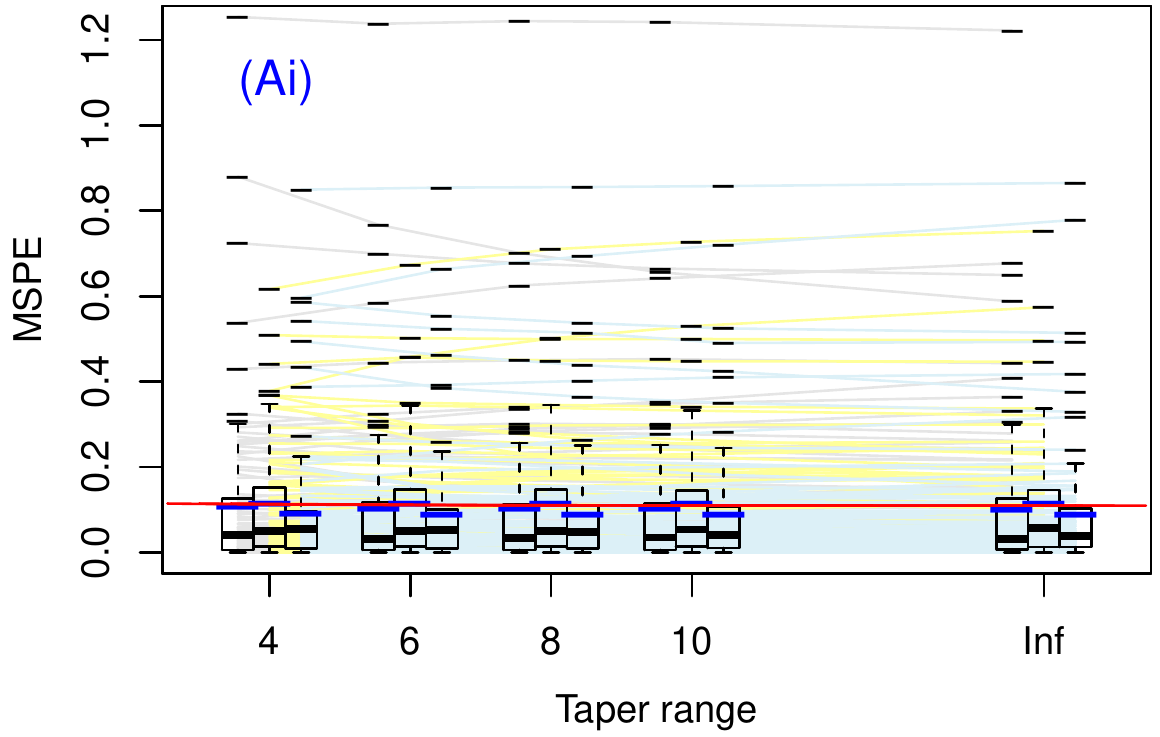}\hfill
\includegraphics[width=.53\textwidth]{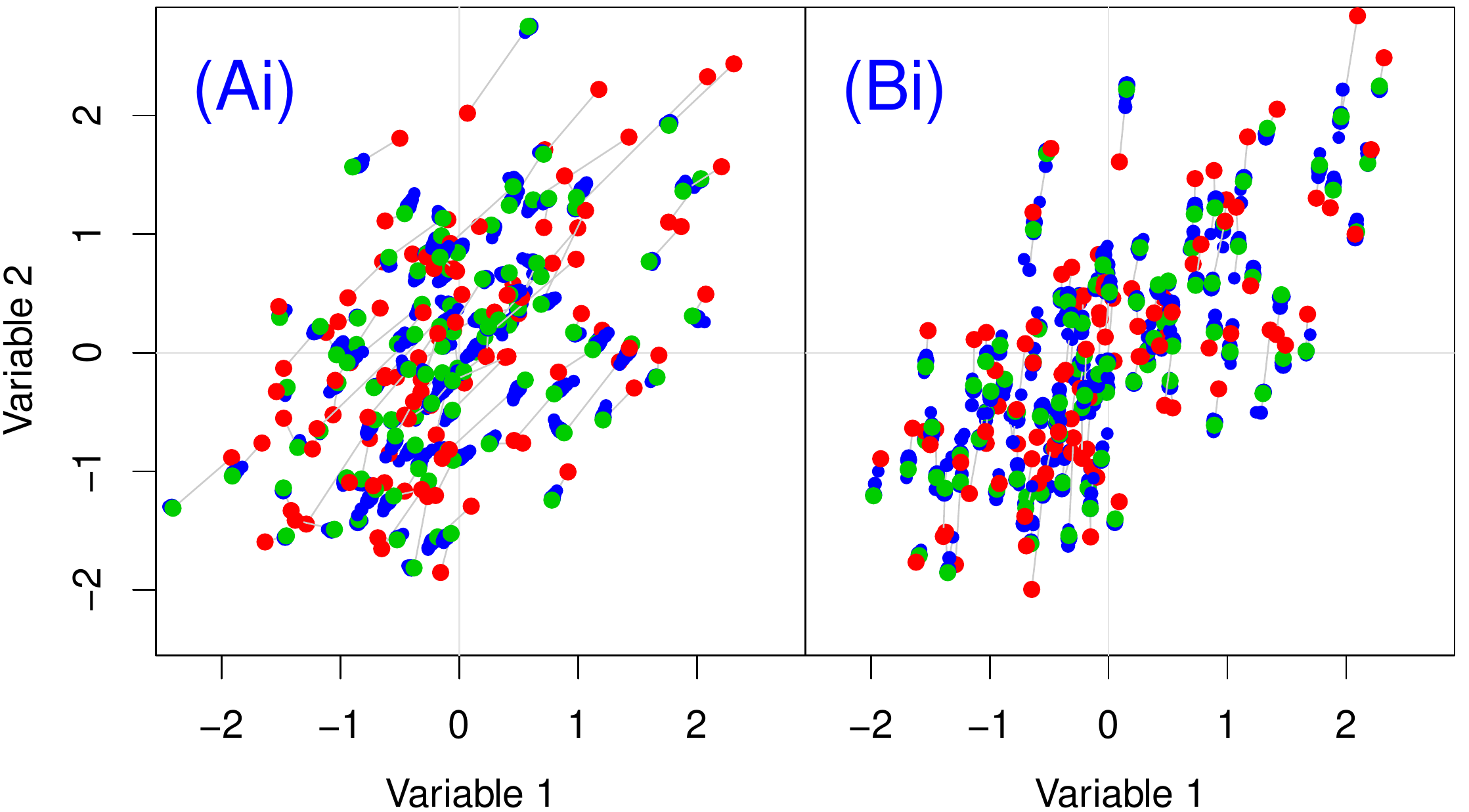}
\caption{Left: Effect of increasing $n$ on the prediction
  error. Horizontal red lines give the theoretical MSPEs. Within each
  boxplot triplet for a specific taper range, left is for $n=400$
  (gray), middle for 1024 (yellow), and right for 2500 (light
  blue). Prediction is based on $\widehat\btheta_\text{tML}$ with
  $\Delta=1$ and 100 realizations of the bivariate process.  Mean is
  indicated by the blue tick. Right: 100 bivariate predictions for
  $n=400$ and $\Delta=1$. Red: simulated ``truth'', green: no
  tapering, blue: tapering with different taper ranges. }
  \label{fig:predicitons}
\end{figure}

The left panel of Figure~\ref{fig:predicitons} further shows the
effect of increasing the number of locations on the MSPE.  The effect
of increasing $n$ is negligible even for the theoretical MSPE, the
values are visually indistinguishable. With as few as $n=400$ we extract
essentially all the information in the system.

The right panel of Figure~\ref{fig:predicitons} shows the results of
100 bivariate predictions at the origin.  There is again virtually no
difference in the predictions using $\gamma=4,6,8,10$ (blue
dots) and no tapering ($\gamma=$~Inf, green dot). For smoother fields
(variable 1, \ref{B}), the prediction error is smaller and thus the
difference between the red and blue/green dots is much smaller than
for variable 2. The choice of the taper matrix function has again only
a marginal effect on the result (not shown).

It has to be kept in mind that our simulation setup is the ``least''
favorable for the tapering approach.  By including a nugget or
reducing the spatial correlation we would receive even more appealing
results because the importance of neighboring locations and their
contribution to the prediction would be less important.  Note also
that estimation and prediction results can be improved by lowering
$\Delta$.

\subsection{Computational efficiency}
The analysis has been implemented with the freely available computer
software R \citep{Ihak:Gent:96,R:15} running on a server with an Intel
Xeon 6C E5-2640 2.50 GHz CPU (24 cores) and $256$GB shared RAM
(parallelization has not been explicitly exploited).  The number of
locations was kept below $2500$ in order to maintain a reasonable computing
time for the untapered settings, which require $\cO(p^3n^3)$ computing
time and $\cO(p^2n^2)$ storage using straightforward R commands with
classical methodologies.

The tapered settings have been implemented using sparse matrix data
structures and algorithms.  The package \rr{spam}
\citep{Furr:15,Furr:Sain:10} is tailored in order to handle tapered
covariance matrices, estimation, and prediction in the framework of
Gaussian random fields.  The core work load consists of calculating a
Cholesky factorization of a permutation of the possibly tapered
covariance matrix. The permutation (multiple minimum degree) improves
storage and operation count; see~\citet{Furr:Sain:10}, \citet{Liu:85},
and \citet{Ng:Peyt:93} for more technical details. From the Cholesky
factor, it is straightforward to calculate the determinant as well as
the quadratic term through two triangular solves. Hence, for large
$n$, there is little difference in computational cost between a
likelihood evaluation or a prediction. Exact operation counts
are difficult to determine but the algorithms are virtually
$\cO(pnh^2)$ for operation count and $\cO(pnh)$ for storage, where $h$
is the ``typical'' number of observations within the taper range.

For estimation, depending on the exact implementation, many likelihood
evaluations are necessary.  Using resonable starting values, the R
function \rr{optim} required on average between $100$ to $250$
function evaluations depending on taper range and model ($n=400$). In
the untapered case, the average was typically somewhat lower.  To
reduce convergence issues, we started estimating the untapered version
using the true parameter values as starting values and subsequently
decreased the taper range using the previous optimum as starting
values. Because of the large size of the datasets, no convergence
issues were encountered and no sample was ``manually'' treated or
eliminated.

\section{Discussion and outlook}\label{sec:out}

Similarly to the univariate case, multivariate tapering is a very
effective approximation approach for prediction and for estimation of
spatially correlated random processes. The small loss in prediction
efficiency is recouped by the computational gains for reasonably 
large data sizes. For very large datasets, approximations have to be
included and tapering is the method of choice as the computational
implementation is straightforward. Compared with other approximation
approaches (low-rank models, e.g.,
\citealp{Cres:Joha:08,Bane:etal:08,Stei:08}, composite likelihood
approaches, e.g., \citealp{Stei:etal:04,Beli:etal:12,Eids:etal:14},
Gaussian Markov random fields type approximations, e.g.,
\citealp{Hart:Hoes:08,Lind:Rue:Lind:11}, etc) tapering is the most
accessible and most scalable approach.

Tapering is especially powerful for prediction. Even for very small
tapers we have a MSPE that is almost identical to the MSPE for the
untapered setting. However, we are substantially faster as a single
prediction is roughly $20$ and $100$ times faster compared with a
classical approach (for $n=2500$ and $n=10000$ using $\gamma=5$).  One
likelihood evaluation is similarly computing intensive as a single
prediction and thus the same advantages hold for estimation.  If the
ultimate goal is prediction, we advocate the use of the one-taper ML
plugin estimates.  The two-taper approach is computationally
self-defeating and should only be used if unbiased estimates are
absolutely necessary.

In the case where the different variables have a similar density of
locations, we propose to use the same taper function for all direct
and cross covariances.  Compared with the taper range, the exact form
of the taper plays a secondary role. Hence for different location
sampling densities, possibly non-stationary, we foresee adaptive
tapers as outlined by \cite{Ande:etal:13} or \cite{Beli:etal:15} as a
valuable alternative.

\bigskip 

For estimation, the standard optimization routines of R (\rr{optim}
and its derivatives) require a substantial amount of time.  We are
currently experimenting with a simple grid search algorithm that
would approximate the ML estimate sufficiently well. Based on the
simulation results in the last section, if prediction based on plugin
estimates is of interest, the approximation is sufficient.

While the uncertainty of the ML estimates can be harnessed
through the Hessian (by product of the \rr{optim} routine)
sufficiently well, deriving uncertainty estimates for an entire
prediction field remains a bottleneck, as accordingly many linear
systems have to be solved.

\section*{Acknowledgments}
RF  acknowledges support of the UZH Research Priority
Program (URPP) on ``Global Change and Biodiversity'' and the Swiss National Science
Foundation SNSF-143282. 
FB presented the content of this paper at the statistics working group of the University of Vienna where he benefited from constructive comments. 

\section*{Appendix}

\subsection*{Proof of the theorems}

\begin{proof}[Proof of Theorem~\ref{thm:consistent:estimation}] 

Because $\Theta$ is compact and because of Lemma~\ref{lem:bounded:derivative}, it is sufficient to show that, for any fixed $\btheta$, $L_{\btheta} - \bar{L}_{\btheta} = o_p(1)$. Hence, let an arbitrary $\btheta$ be fixed. We have
\begin{eqnarray} \label{eq:Tun:Tdeux}
L_{\btheta} - \bar{L}_{\btheta} & = & \frac{1}{np} \log{ \left( \det{ \left[  \bSigma_{\btheta} \K_{\btheta}^{-1} \right] } \right) } + \frac{1}{np} \z \T ( \bSigma_{\btheta}^{-1} - \K_{\btheta}^{-1}  ) \z \nonumber \\
& = & T_1 + T_2.
\end{eqnarray}
We treat $T_1$ and $T_2$ separately. First
\[
T_1 = \frac{1}{np} \sum_{i=1}^{np} \log{ \left( \lambda_i \left[  \K_{\btheta}^{-1/2} \bSigma_{\btheta} \K_{\btheta}^{-1/2} \right] \right)}.
\]
The $\lambda_i(\cdot)$ above are between two constants $0< A$ and $B < + \infty$ uniformly in $i$ and $n$ because of
Condition~\ref{cond:minimal:eigenvalue} and Lemma~\ref{lem:bounded:max:singular:value}. Thus, there exists a finite constant $C$ so that for any $i,n$
\[
\left| \log{ \left( \lambda_i \left[ \K_{\btheta}^{-1/2} \bSigma_{\btheta} \K_{\btheta}^{-1/2} \right] \right)} \right| \leq C
\left| 1 - \lambda_i \left[ \K_{\btheta}^{-1/2}  \bSigma_{\btheta} \K_{\btheta}^{-1/2} \right] \right|.
\]
Thus
\begin{eqnarray*}
| T_1 | & \leq &  \frac{C}{np} \sum_{i=1}^{np} \left| 1 - \lambda_i \left[ \K_{\btheta}^{-1/2} \bSigma_{\btheta} \K_{\btheta}^{-1/2} \right] \right|
 \\
\mbox{(Cauchy-Schwarz:)} & \leq & \frac{C}{np} \sqrt{np} \sqrt{ \sum_{i=1}^{np} \left| 1 - \lambda_i \left[  \K_{\btheta}^{-1/2} \bSigma_{\btheta} \K_{\btheta}^{-1/2} \right] \right|^2 }  \\
& = & C \sqrt{ \frac{1}{np} \tr \left( \left\{ \I - \K_{\btheta}^{-1/2} \bSigma_{\btheta} \K_{\btheta}^{-1/2} \right\}^2 \right) } \\
& = & C \sqrt{ \frac{1}{np} \tr \left( \left\{ \K_{\btheta}^{-\frac{1}{2}} \left[  \K_{\btheta} - \bSigma_{\btheta} \right] \K_{\btheta}^{-\frac{1}{2}} \right\}^2  \right) } \\
& = & C \sqrt{ \frac{1}{np} \left| \left| \K_{\btheta}^{-\frac{1}{2}} \left[  \K_{\btheta} - \bSigma_{\btheta} \right] \K_{\btheta}^{-\frac{1}{2}} \right| \right|_F^2 }.
\end{eqnarray*}
Now, because of Condition~\ref{cond:minimal:eigenvalue}, $\rho_1(  \K_{\btheta}^{-\frac{1}{2}} )$ is bounded uniformly in $n$ by a finite constant $D$. Hence we have
\[
|T_1| \leq CD^2 \sqrt{ \frac{1}{np} \left| \left|   \K_{\btheta} - \bSigma_{\btheta}  \right| \right|_F^2 },
\]
which goes to $0$ as $n \to \infty$ because of Lemma~\ref{lem:difference:frobenius:norm}.
Next, turning to $T_2$ in \eqref{eq:Tun:Tdeux},
\begin{eqnarray*}
\E \left(  T_2  \right) & = & 
\frac{1}{np} \tr \left( \bSigma_{\btheta_0} \left( \bSigma_{\btheta}^{-1} - \K_{\btheta}^{-1}  \right) \right) \\
& = & \frac{1}{np} \tr \left( \bSigma_{\btheta_0} \K_{\btheta}^{-1} \left( \K_{\btheta} - \bSigma_{\btheta}  \right) \bSigma_{\btheta}^{-1} \right).
\end{eqnarray*}
Hence, interpreting $\tr \left( \A \B \right)$ as a scalar product between $\A$ and $\B \T$, we obtain by the Cauchy-Schwarz inequality
\[
\left| \E \left(  T_2  \right) \right| \leq \sqrt{ \frac{1}{np} || \bSigma_{\btheta}^{-1} \bSigma_{\btheta_0} \K_{\btheta}^{-1} ||_F^2 } \sqrt{ \frac{1}{np} || \K_{\btheta} - \bSigma_{\btheta}  ||_F^2 }.
\]
In the above display, the first square root is bounded because of Condition~\ref{cond:minimal:eigenvalue} and of Lemma~\ref{lem:bounded:max:singular:value}. The second square root goes to $0$ because of Lemma~\ref{lem:difference:frobenius:norm}. Hence $\E (T_2) \to_{n \to \infty} 0$. Furthermore
\begin{eqnarray*}
\var \left( T_2 \right) & = & \frac{2}{(np)^2} \tr \left(  \bSigma_{\btheta_0} \left[ \bSigma_{\btheta}^{-1} - \K_{\btheta}^{-1} \right] \bSigma_{\btheta_0} \left[ \bSigma_{\btheta}^{-1} - \K_{\btheta}^{-1} \right] \right)
\\
& \leq & \frac{2}{np} \rho_1(\bSigma_{\btheta_0})^2 \left[ \rho_1( \bSigma_{\btheta}^{-1} ) + \rho_1( \K_{\btheta}^{-1} ) \right]^2 .
\end{eqnarray*}
In the above display, the $\rho_1(\cdot)$ are bounded because of Condition~\ref{cond:minimal:eigenvalue} and Lemma~\ref{lem:bounded:max:singular:value}. Thus $\var(T_2) \to_{n \to \infty} 0$. So $T_2 = o_p(1)$ which finishes the proof.
\end{proof}

\begin{proof}[Proof of Theorem~\ref{thm:convergence:IMSE}]
We only prove \eqref{eq:convergence:IMSE}, the proof of \eqref{eq:convergence:fixed:MSE} being similar and technically simpler.
Using $a^2 - b^2 = (a+b)(a-b)$ followed by the Cauchy-Schwarz inequality, we obtain
\begin{flalign} \label{eq:Uun:Udeux}
& \sup_{\btheta}
\left| 
\int_{\cD_n} \left[ \bsigma_{\btheta}(\x) \T \bSigma_{\btheta}^{-1} \z - Z_1(\x) \right]^2  f_n(\x) d \x
-
\int_{\cD_n} \left[ \k_{\btheta}(\x) \T \K_{\btheta}^{-1} \z - Z_1(\x) \right]^2  f_n(\x) d \x
\right| & \nonumber \\
& \leq 
\int_{\cD_n}
\sup_{\btheta} \left( \left|  \bsigma_{\btheta}(\x) \T \bSigma_{\btheta}^{-1} \z + \k_{\btheta}(\x) \T \K_{\btheta}^{-1} \z -2  Z_1(\x) \right|
\left|  \bsigma_{\btheta}(\x) \T \bSigma_{\btheta}^{-1} \z - \k_{\btheta}(\x) \T \K_{\btheta}^{-1} \z  \right|
\right)  f_n(\x) d \x & \nonumber \\
  & \leq 
\sqrt{  \int_{\cD_n}
\sup_{\btheta} \left( \bsigma_{\btheta}(\x) \T \bSigma_{\btheta}^{-1} \z + \k_{\btheta}(\x) \T \K_{\btheta}^{-1} \z -2  Z_1(\x) \right)^2
  f_n(\x) d \x } & \nonumber \\
&  ~ ~ ~ \sqrt{ \int_{\cD_n}
\sup_{\btheta} 
\left(  \bsigma_{\btheta}(\x) \T \bSigma_{\btheta}^{-1} \z - \k_{\btheta}(\x) \T \K_{\btheta}^{-1} \z  \right)^2
  f_n(\x) d \x } & \nonumber \\
  & =  \sqrt{U_1} \sqrt{U_2}. &
\end{flalign}

We show separately that $U_1 = O_p(1)$ and $U_2 = o_p(1)$. For $U_1$,
\begin{eqnarray*}
U_1  & \leq & 3 \int_{\cD_n}
\sup_{\btheta} \left( \bsigma_{\btheta}(\x) \T \bSigma_{\btheta}^{-1} \z  \right)^2
  f_n(\x) d \x
  +
 3 \int_{\cD_n}
\sup_{\btheta} \left(   \k_{\btheta}(\x) \T \K_{\btheta}^{-1} \z  \right)^2
  f_n(\x) d \x \\
& & +12 \int_{\cD_n}
\sup_{\btheta} \left(   Z_1(\x) \right)^2
  f_n(\x) d \x.
\end{eqnarray*}

The last random integral in the above display has constant mean value $12 c_{11}(\0;\btheta_0)$ so it is bounded in probability. We address the two remaining random integrals in the same way, and give the details for the first one only. Using a version of Sobolev embedding theorem (Theorem 4.12, Part I, Case A in \citealp{adams03sobolev}), there exists a finite constant $A_{\Theta}$ depending only on $\bTheta$ so that
\begin{eqnarray*}
\sup_{\btheta}  \left(   \bsigma_{\btheta}(\x) \T \bSigma_{\btheta}^{-1} \z  \right)^2 
  \leq A_{\Theta} \int_{\Theta} \left| \left(   \bsigma_{\btheta}(\x) \T \bSigma_{\btheta}^{-1} \z  \right)^2 \right|^{q+1} d \btheta
  + 
A_{\Theta}  \sum_{i=1}^q \int_{\Theta}  \left| \frac{\partial}{ \partial \theta_i} \left[ \left( \bsigma_{\btheta}(\x) \T \bSigma_{\btheta}^{-1} \z   \right)^2 \right] \right|^{q+1} d \btheta.
\end{eqnarray*}
Hence, using Fubini theorem for non-negative integrand and $(|a|+|b|)^{q+1} \leq 2^{q+1} (|a|^{q+1} + |b|^{q+1})$, we obtain
\begin{flalign*}
& \E \left( \int_{\cD_n}
\sup_{\btheta} \left( \bsigma_{\btheta}(\x) \T \bSigma_{\btheta}^{-1} \z  \right)^2
  f_n(\x) d \x \right) &  \\ 
 & \leq A_{\Theta} \int_{\Theta} \int_{\cD_n} \E \left( \left| \left(   \bsigma_{\btheta}(\x) \T \bSigma_{\btheta}^{-1} \z  \right)^2 \right|^{q+1} \right)  f_n(\x) d \x d \btheta & \\
& ~ ~ ~ ~ +  A_{\Theta} 2^{2q+2} \sum_{i=1}^q \int_{\Theta} \int_{\cD_n} \E \left( \left| \left( \frac{\partial \bsigma_{\btheta}(\x)\T}{ \partial \theta_i}  \bSigma_{\btheta}^{-1} \z  \right) \left( \bsigma_{\btheta}(\x)\T \bSigma_{\btheta}^{-1} \z   \right) \right|^{q+1} \right) f_n(\x) d \x d \btheta  & \\
& ~ ~ ~ ~ + A_{\Theta} 2^{2q+2} \sum_{i=1}^q \int_{\Theta} \int_{\cD_n} \E \left( \left| \left( \bsigma_{\btheta}(\x)\T
\bSigma_{\btheta}^{-1} \frac{\partial \bSigma_{\btheta}}{ \partial \theta_i} \bSigma_{\btheta}^{-1} \z  \right) \left( \bsigma_{\btheta}(\x) \T \bSigma_{\btheta}^{-1} \z   \right) \right|^{q+1} \right) f_n(\x) d \x d \btheta. &
\end{flalign*}
Let $\lambda(\Theta)$ be the Lebesgue measure of $\Theta$. Using the Cauchy--Schwarz inequality and letting $B_{q+1}$ be the positive constant so that, for $X$ following a Gaussian distribution with zero mean, $\E(X^{2(q+1)}) = B_{q+1} ( \E(X^2) )^{q+1}$, we obtain, by letting $D = A_{\Theta} B_{q+1} \lambda(\Theta) 2^{2q+2}$,
\begin{flalign} \label{eq:after:B:qplus1}
& \E \left( \int_{\cD_n}
\sup_{\btheta} \left( \bsigma_{\btheta}(\x)\T \bSigma_{\btheta}^{-1} \z  \right)^2
  f_n(\x) d \x \right) &  \\ 
 & \leq A_{\Theta} B_{q+1} \lambda(\Theta) \sup_{\x,\btheta} \E^{q+1} \left(  \left(   \bsigma_{\btheta}(\x)\T \bSigma_{\btheta}^{-1} \z  \right)^2  \right)   & \nonumber \\
&~ ~ ~ ~ +  D \sum_{i=1}^q \sup_{\x,\btheta} \sqrt{ \E^{q+1} \left(  \Big( \frac{\partial \bsigma_{\btheta}(\x)\T}{ \partial \theta_i}  \bSigma_{\btheta}^{-1} \z  \Big)^2 \right) }
  \sup_{\x,\btheta} \sqrt{ \E^{q+1} \left(  \left( \bsigma_{\btheta}(\x)\T \bSigma_{\btheta}^{-1} \z   \right)^2 \right)  } & \nonumber \\
& ~ ~ ~ ~ + D \sum_{i=1}^q \sup_{\x,\btheta} \sqrt{ \E^{q+1}  \left( \Big( \bsigma_{\btheta}(\x)\T
\bSigma_{\btheta}^{-1} \frac{\partial \bSigma_{\btheta}}{ \partial \theta_i} \bSigma_{\btheta}^{-1} \z \Big)^2 \right)   }
  \sup_{\x,\btheta} \sqrt{ \E^{q+1} \left(  \left( \bsigma_{\btheta}(\x)\T \bSigma_{\btheta}^{-1} \z   \right)^2 \right) }. & \nonumber
\end{flalign}

Now, all the $\E^{q+1}(\cdot)$ above are of the form $\E^{q+1}( [\w_{\btheta}(\x) \T \bfM_{\btheta} \z ]^2 )$.
Furthermore, $\bfM_{\btheta}$ is symmetric and satisfies, by using Condition~\ref{cond:minimal:eigenvalue} and Lemma~\ref{lem:bounded:max:singular:value}, $\sup_{\btheta} \rho_1( \bfM_{\btheta} ) \leq C$ for a finite constant $C$.
Finally, for $i=k(n-1)+a$, with $k=1,\dots,p$ and $a=1,\dots,n$, $\sup_{\btheta} |\w_{\btheta}(\x)_{i}| \leq G / (1+|\x - \x_a|^{d+\alpha})$, for a finite constant $G$. Hence,
\begin{eqnarray*}
\sup_{\x,\btheta} \E( [ \w_{\btheta}(\x) \T \bfM_{\btheta} \z ]^2 ) & = & \sup_{\x,\btheta} \w_{\btheta}(\x) \T \bfM_{\btheta} \bSigma_{\btheta_0} \bfM_{\btheta} \w_{\btheta}(\x) \\
& \leq & \sup_{\x,\btheta} ||\w_{\btheta}(\x) ||^2 C^2 \sup_{\btheta} \rho_1(\bSigma_{\btheta_0}),
\end{eqnarray*}
which is bounded because of Lemmas~\ref{lem:bounded:sum} and~\ref{lem:bounded:max:singular:value}. 
Hence, in \eqref{eq:Uun:Udeux}, $U_1 = O_p(1)$. Let us now turn to $U_2$. Using the Sobolev embedding theorem again with the constant $A_{\Theta}$, we obtain
\begin{eqnarray*}
\E( U_2 ) & \leq & A_{\Theta}   \int_{\Theta} \int_{\cD_n} \E \left( \left| \left[ \bsigma_{\btheta}(\x) \T \bSigma_{\btheta}^{-1} \z -  \k_{\btheta}(\x) \T \K_{\btheta}^{-1} \z \right]^2 \right|^{q+1} \right) f_n(\x) d \x  d \btheta \\
& & + A_{\Theta} \sum_{i=1}^q  \int_{\Theta} \int_{\cD_n} \E \left(  \left| \frac{\partial}{\partial \theta_i} \left( \left[ \bsigma_{\btheta}(\x) \T \bSigma_{\btheta}^{-1} \z -  \k_{\btheta}(\x) \T \K_{\btheta}^{-1} \z \right]^2 \right) \right|^{q+1} \right) f_n(\x) d \x  d \btheta \\
& = & A_{\Theta} I_0 + A_{\Theta} \sum_{i=1}^q I_i
\end{eqnarray*}
In the above display, we only show that the integrals $I_1,\dots,I_q$ converge to $0$, since it is more difficult than for the integral $I_0$. Hence let us fix an integer $i$ in $\{1,\dots,q\}$. Using Cauchy-Schwarz inequality, we have
\begin{eqnarray*}
I_i & \leq & A_{\Theta} \lambda(\Theta) 2^{q+1} \sup_{\x,\btheta} \sqrt{ \E \left( \Big|
 \frac{\partial}{\partial \theta_i} \left[ \bsigma_{\btheta}(\x) \T \bSigma_{\btheta}^{-1} \z -  \k_{\btheta}(\x) \T \K_{\btheta}^{-1} \z \right]  \Big|^{2(q+1)} \right) } \\
 & & \times~
 \sup_{\x,\btheta} \sqrt{ \E \left( \Big|
 \bsigma_{\btheta}(\x) \T \bSigma_{\btheta}^{-1} \z -  \k_{\btheta}(\x) \T \K_{\btheta}^{-1} \z  \Big|^{2(q+1)} \right) }. 
\end{eqnarray*}
Again, both of the supremums of square roots in the above display go to $0$ as $n \to \infty$ and we show it only for the first one, since it is more difficult than for the second one. Using the positive constant $B_{q+1}$ used before \eqref{eq:after:B:qplus1}, it is sufficient to show that 
\[
\sup_{\btheta,\x}\E \left( \left\{ \frac{\partial}{\partial \theta_i} \left[ \bsigma_{\btheta}(\x) \T \bSigma_{\btheta}^{-1} \z -  \k_{\btheta}(\x) \T \K_{\btheta}^{-1} \z \right]  \right\}^2 \right) 
\]
goes to $0$ as $n \to \infty$. Then, we use
\[
(a_{11} - a_{22} )^2 \leq 2 \left[ (a_{11} - a_{21})^2 + (a_{21} - a_{22})^2 \right]
\]
and
\[
(b_{1111} - b_{2222})^2 \leq 4 \left[ (b_{1111} - b_{2111})^2 + (b_{2111} - b_{2211})^2 + (b_{2211} - b_{2221})^2+ (b_{2221} - b_{2222})^2 \right],
\] 
where subscripts $1$ and $2$ denote ``untapered'' and ``tapered'' and where for example $a_{21} = \{[\partial \k_{\btheta}(\x)]/[\partial \theta_i]\} \T \bSigma_{\btheta}^{-1} \z$ and
$b_{2211} = \k_{\btheta}(\x) \T \K_{\btheta}^{-1}  \{ [\partial \bSigma_{\btheta}]/[\partial \theta_i]\} \bSigma_{\btheta}^{-1} \z$. From this, it is sufficient to show that a generic term of the form
\begin{equation} \label{eq:generic:un}
\sup_{\btheta,\x} \E \left( \left[ (\v_{\btheta}(\x) - \w_{\btheta}(\x)) \T \bfM_{\btheta} \z \right]^2 \right),
\end{equation}
\begin{equation} \label{eq:generic:deux}
\sup_{\btheta,\x} \E \left( \left[ \m_{\btheta}(\x)  \T \bfM_{\btheta} ( \bSigma_{\btheta}^{-1} -  \K_{\btheta}^{-1} ) \N_{\btheta} \z \right]^2 \right)
\end{equation}
or
\begin{equation} \label{eq:generic:trois}
\sup_{\btheta,\x} \E \left( \left[ \m_{\btheta}(\x)  \T \bfM_{\btheta} \left( \frac{ \partial \bSigma_{\btheta}}{\partial \theta_i} -  \frac{\partial \K_{\btheta}}{\partial \theta_i} \right) \N_{\btheta} \z \right]^2 \right),
\end{equation}
goes to $0$. In \eqref{eq:generic:un}, \eqref{eq:generic:deux} and \eqref{eq:generic:trois}, $\sup_{\btheta} \rho_1( \bfM_{\btheta} )$ and $\sup_{\btheta} \rho_1( \N_{\btheta} )$ are bounded (Condition~\ref{cond:minimal:eigenvalue} and Lemma~\ref{lem:bounded:max:singular:value}); $\v_{\btheta}(\x) - \w_{\btheta}(\x) =\bsigma_{\btheta}(\x)- \k_{\btheta}(\x)$  or 
$\v_{\btheta}(\x) - \w_{\btheta}(\x) = (\partial \bsigma_{\btheta}(\x))/( \partial \theta_i ) - (\partial \k_{\btheta}(\x))/( \partial \theta_i )$; and $\m_{\btheta}(\x) = \k_{\btheta}(\x)$ or $\m_{\btheta}(\x) = \{[\partial \k_{\btheta}(\x)]/[\partial \theta_i]\}$. 

Let us now show that a generic term of the form \eqref{eq:generic:un} goes to $0$. We have
\begin{eqnarray*}
\sup_{\btheta,\x} \E \left( \left[ (\v_{\btheta}(\x) - \w_{\btheta}(\x)) \T \bfM_{\btheta} \z \right]^2 \right)
& = & \sup_{\btheta,\x}  (\v_{\btheta}(\x) - \w_{\btheta}(\x)) \T \bfM_{\btheta} \bSigma_{\btheta_0} \bfM_{\btheta} \T
(\v_{\btheta}(\x) - \w_{\btheta}(\x)) \\
& \leq & \sup_{\btheta} \rho_1(\bfM_{\btheta} \bSigma_{\btheta_0} \bfM_{\btheta} \T)
\sup_{\btheta,\x} || \v_{\btheta}(\x) - \w_{\btheta}(\x) ||^2,
\end{eqnarray*}
which goes to $0$ as $n \to \infty$ by remembering that $\sup_{\btheta} \rho_1( \bfM_{\btheta} )$ is bounded and by using Lemmas~\ref{lem:bounded:max:singular:value} and~\ref{lem:taper:convergence:sum}.

For a generic term of the form \eqref{eq:generic:deux}, we have
\begin{flalign} \label{eq:for:generic:deux}
& \sup_{\btheta,\x} \E \left( \left[ \m_{\btheta}(\x)  \T \bfM_{\btheta} \left(  \bSigma_{\btheta}^{-1} - \K_{\btheta}^{-1} \right) \N_{\btheta} \z \right]^2 \right) \nonumber & \\
&~ = \sup_{\btheta,\x} \E \left( \left[ \m_{\btheta}(\x)  \T \bfM_{\btheta} \K_{\btheta}^{-1} \left( \K_{\btheta} - \bSigma_{\btheta} \right) \bSigma_{\btheta}^{-1} \N_{\btheta} \z \right]^2 \right)  & \\
&~ = 
\sup_{\btheta,\x} \m_{\btheta}(\x) \T  \bfM_{\btheta} \K_{\btheta}^{-1} \left( \K_{\btheta} - \bSigma_{\btheta} \right) \bSigma_{\btheta}^{-1} \N_{\btheta}
\bSigma_{\btheta_0}
\N_{\btheta} \T
\bSigma_{\btheta}^{-1} \left( \K_{\btheta} - \bSigma_{\btheta} \right) \K_{\btheta}^{-1} \bfM_{\btheta} \T \m_{\btheta}(\x)
& \nonumber \\
&~ \leq  \sup_{\btheta,\x} ||\m_{\btheta}(\x)||^2 \rho_1(\bfM_{\btheta})^2 \rho_1(\N_{\btheta})^2
\rho_1(\bSigma_{\btheta}^{-1})^2 \rho_1(\K_{\btheta}^{-1})^2
\rho_1(\bSigma_{\btheta_0}) \rho_1( \K_{\btheta} - \bSigma_{\btheta} )^2. \nonumber 
\end{flalign}
In the above display, $\sup_{\btheta,\x} ||\m_{\btheta}(\x)||^2$ is bounded because of Lemma~\ref{lem:bounded:sum}. Furthermore  all the $\rho_1(\cdot)^2$, except the last one are bounded uniformly in $\btheta$,
by remembering that $\sup_{\btheta} \rho_1( \bfM_{\btheta} )$ and $\sup_{\btheta} \rho_1( \N_{\btheta} )$ are bounded, and because of Condition~\ref{cond:minimal:eigenvalue} and Lemma~\ref{lem:bounded:max:singular:value}.
Finally $\sup_{\btheta} \rho_1(\K_{\btheta} - \bSigma_{\btheta})$ goes to $0$ as $n \to \infty$ because of Lemma~\ref{lem:vanishing:max:singular:value}. Hence a generic term of the form \eqref{eq:generic:deux} goes to $0$ as $n \to \infty$. Finally, by the same arguments as following \eqref{eq:for:generic:deux}, we show that a generic term of the form \eqref{eq:generic:trois} goes to $0$ as $n \to \infty$. Hence, $\E(U_2)$ in \eqref{eq:Uun:Udeux} goes to $0$ as $n \to \infty$ which concludes the proof.
\end{proof}

\subsection*{Technical results}

The following lemma is a generalization of Lemma D.1 in \cite{bachoc14asymptotic}.

\begin{lem} \label{lem:bounded:sum}
Let $\Delta >0$ and $\alpha >0$ be fixed.
Let $f(\x;\btheta)$ be a family of functions: $\IR^d \to \IR$ so that for all $\btheta \in \Theta$, $|f(\x;\btheta)| \leq 1/(1+|\x|^{d+\alpha})$. Then, for any $m \in \IN^+$, $\v \in \IR^d$, $\s_1,..,\s_m \in \IR^d$, so that for any $i \neq j$
$|\s_i - \s_j| \geq \Delta$, we have
\[
\sup_{\btheta} \sum_{i=1}^m |f(\s_i-\v ;\btheta)| \leq \frac{d2^{2d}}{\Delta^d} \sum_{k=1}^{+ \infty} \frac{k^{d-1}}{1+(k-1)^{d+\alpha}},
\]
where the right-hand term in the above display is a finite constant depending only on $d$, $\Delta$ and $\alpha$.

\end{lem}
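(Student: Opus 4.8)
The plan is to strip away everything probabilistic and analytic and reduce the statement to a deterministic packing estimate. Since the domination $|f(\x;\btheta)|\le 1/(1+|\x|^{d+\alpha})$ is uniform over $\btheta\in\Theta$, the supremum in $\btheta$ costs nothing: for every $\btheta$,
\[
\sum_{i=1}^m |f(\s_i-\v;\btheta)| \le \sum_{i=1}^m \frac{1}{1+|\s_i-\v|^{d+\alpha}},
\]
and the right-hand side is free of $\btheta$ and of $f$. After translating by $\v$ I may take $\v=\0$, so the whole lemma reduces to bounding $\sum_{i=1}^m 1/(1+|\s_i|^{d+\alpha})$ for an arbitrary family that is $\Delta$-separated in the norm $|\cdot|=\max_j|\cdot_j|$, by a constant depending only on $d$, $\Delta$, $\alpha$.

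First I would sort the points into concentric shells indexed by $k\ge 1$ according to their sup-distance to the origin, choosing the shells so that on the $k$th shell the decaying weight is controlled by its largest value $1/(1+(k-1)^{d+\alpha})$. Then
\[
\sum_{i=1}^m \frac{1}{1+|\s_i|^{d+\alpha}} \le \sum_{k=1}^{+\infty} \frac{N_k}{1+(k-1)^{d+\alpha}},
\]
where $N_k$ is the number of observation points falling in shell $k$, and it remains to bound each $N_k$.

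The heart of the proof, and the step I expect to be the main obstacle, is this count $N_k$. Here I would use the separation hypothesis geometrically: because $|\s_i-\s_j|\ge\Delta$ in the sup-norm, the axis-aligned cubes (sup-balls) of radius $\Delta/2$ centred at the $\s_i$ are pairwise disjoint and each has Lebesgue measure $\Delta^d$. Every such cube attached to a shell-$k$ point lies inside the shell enlarged by $\Delta/2$ on both sides, an annular region between two concentric cubes whose volume I can write down explicitly. Dividing that volume by $\Delta^d$ bounds $N_k$; applying the mean value theorem to $r\mapsto r^d$ across the (bounded-width) band produces a factor $k^{d-1}$ rather than $k^d$, and a careful accounting of the numerical constants in the volume difference yields a bound of the form $N_k\le (d\,2^{2d}/\Delta^d)\,k^{d-1}$.

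Finally I would reassemble the estimates, obtaining
\[
\sum_{i=1}^m \frac{1}{1+|\s_i|^{d+\alpha}} \le \frac{d\,2^{2d}}{\Delta^d}\sum_{k=1}^{+\infty}\frac{k^{d-1}}{1+(k-1)^{d+\alpha}},
\]
and then check that the series converges: its general term is asymptotic to $k^{d-1}/k^{d+\alpha}=k^{-1-\alpha}$, which is summable precisely because $\alpha>0$. This last observation also makes transparent that the bound is a finite constant depending only on $d$, $\Delta$ and $\alpha$, as asserted. The only delicate point throughout is the shell count: one must choose the shell widths and the enlargement so that the disjoint cubes are captured exactly, the $k^{d-1}$ growth is preserved, and the stated constant $d\,2^{2d}/\Delta^d$ survives the volume bookkeeping.
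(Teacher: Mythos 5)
Your proof is correct and takes essentially the same approach as the paper's: uniform domination by $1/(1+|\x|^{d+\alpha})$, decomposition into unit-width sup-norm shells with the weight on shell $k$ bounded by $1/(1+(k-1)^{d+\alpha})$, a packing count $N_k = \cO(k^{d-1}/\Delta^d)$ from the $\Delta$-separation via disjoint sup-balls and the annulus-volume bound $(2k)^d-(2k-2)^d \leq 2^d d k^{d-1}$, and summability since $\alpha>0$. Your only deviation is in the packing detail --- you place full cubes of volume $\Delta^d$ inside the shell enlarged by $\Delta/2$, whereas the paper places shrunken balls of volume $(\Delta/2)^d$ inside the shell $E_k$ itself --- which is if anything slightly more careful about points near the shell boundary, and since only the finiteness of the constant is used downstream, the marginally different constant bookkeeping is immaterial.
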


\begin{proof}[Proof of Lemma~\ref{lem:bounded:sum}]

By assumption on $f(\x,\btheta)$ we have
\[
\sup_{\btheta} \sum_{i=1}^m |f(\s_i-\v ;\btheta)| \leq
\sum_{i=1}^m \frac{1}{1+|\s_i-\v|^{d+ \alpha}}.
\]
Let, for $k \geq 1$, $N_k$ be the number of points $\s_j$ in $E_k = \{ \w ;  |\w-\v| \leq k \} \backslash \{ \w ;  |\w-\v| \leq k-1 \}$. Then, to the $N_k$ points $\s_j$ that are in $E_k$ we can associate $N_k$ disjoint $|\cdot|$-balls in $E_k$ so that each of them has volume $(\Delta/2)^d$ (recall
$|\a|=\max_l |a_l|$). The total volume occupied by these balls is $N_k(\Delta/2)^d$. On the other hand, the volume of $E_k$ is
\[
(2k)^d - (2k-2)^d = 2^d \int_{k-1}^k d u^{d-1} du \leq 2^d d k^{d-1}.
\]
So we have $N_k \leq d 2^{2d} k^{d-1} / \Delta^d$. The result is then obtained by noting that for $\s_j \in E_k$, $|\s_j - \v| \geq k-1$.
\end{proof}

The following lemma is a generalization of Lemma D.3 in \cite{bachoc14asymptotic}.

\begin{lem} \label{lem:vanishing:rest}
Consider the setting of Lemma~\ref{lem:bounded:sum}.
Then, for any $N \in \IN^+$, for any $m \in \IN^+$, $\v \in \IR^d$, $\s_1,..,\s_m \in \IR^d$, so that for any $i \neq j$
$|\s_i - \s_j| \geq \Delta$, we have
\[
\sup_{\btheta} \sum_{i=1,\dots,m; |\s_i - \v| > N-1 } |f(\s_i-\v ;\btheta)| \leq \frac{d2^{2d}}{\Delta^d} \sum_{k=N}^{+ \infty} \frac{k^{d-1}}{1+(k-1)^{d+\alpha}},
\]
where the right-hand term in the above display is a function of $N$, $d$, $\Delta$ and $\alpha$ only, that goes to $0$ as $N \to + \infty$ and for fixed $d,\Delta,\alpha$.

\end{lem}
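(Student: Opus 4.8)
The plan is to mimic the proof of Lemma~\ref{lem:bounded:sum} almost verbatim, the only change being that the annular shell decomposition is truncated to start at index $N$ rather than $1$. First I would use the pointwise bound $|f(\s_i - \v ; \btheta)| \leq 1/(1 + |\s_i - \v|^{d+\alpha})$, which is uniform in $\btheta$, to reduce the claim to bounding the restricted sum $\sum_{i : |\s_i - \v| > N-1} 1/(1 + |\s_i - \v|^{d+\alpha})$.

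Next I would reuse the shells $E_k = \{ \w ; |\w - \v| \leq k \} \setminus \{ \w ; |\w - \v| \leq k-1 \}$ from the previous lemma. The key observation is that any index $i$ with $|\s_i - \v| > N-1$ lies in some shell $E_k$ with $k \geq N$: indeed, if $\s_i \in E_k$ then $k-1 < |\s_i - \v| \leq k$, and the condition $|\s_i - \v| > N-1$ forces $k \geq N$. Conversely, points in shells $E_k$ with $k < N$ satisfy $|\s_i - \v| \leq k \leq N-1$ and are therefore excluded. Hence the restricted sum runs over exactly the shells $E_k$ with $k \geq N$.

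The volume/packing argument from Lemma~\ref{lem:bounded:sum} then applies unchanged: the minimal-distance condition $|\s_i - \s_j| \geq \Delta$ lets us associate to the $N_k$ points in $E_k$ that many disjoint $|\cdot|$-balls of volume $(\Delta/2)^d$, yielding $N_k \leq d 2^{2d} k^{d-1}/\Delta^d$, and each $\s_i \in E_k$ satisfies $|\s_i - \v| \geq k-1$ so that $1/(1 + |\s_i - \v|^{d+\alpha}) \leq 1/(1 + (k-1)^{d+\alpha})$. Summing over $k \geq N$ gives precisely the stated bound
\[
\sup_{\btheta} \sum_{i : |\s_i - \v| > N-1} |f(\s_i - \v ; \btheta)| \leq \frac{d 2^{2d}}{\Delta^d} \sum_{k=N}^{+\infty} \frac{k^{d-1}}{1 + (k-1)^{d+\alpha}}.
\]

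Finally, the right-hand side is the tail of the series $\sum_{k \geq 1} k^{d-1}/(1 + (k-1)^{d+\alpha})$, already shown convergent in Lemma~\ref{lem:bounded:sum} (the summand behaves like $k^{-1-\alpha}$), so its tail tends to $0$ as $N \to +\infty$ for fixed $d, \Delta, \alpha$. There is essentially no obstacle here; the only point requiring a moment's care is matching the tail condition $|\s_i - \v| > N-1$ with the correct starting shell index $k = N$, which is exactly what makes the lower limit of the $k$-sum equal to $N$ rather than some shifted value.
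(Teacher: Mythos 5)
Your proof is correct and is essentially the paper's own argument: the paper proves this lemma by the same one-line observation that only the shells $E_k$ with $k \geq N$ from the proof of Lemma~\ref{lem:bounded:sum} contribute, reusing the packing bound $N_k \leq d2^{2d}k^{d-1}/\Delta^d$ and the pointwise bound $|\s_i - \v| \geq k-1$ unchanged. Your additional care in matching the tail condition $|\s_i - \v| > N-1$ to the starting shell index $k = N$ is exactly the detail the paper leaves implicit, and it checks out.
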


\begin{proof}[Proof of Lemma~\ref{lem:vanishing:rest}]
The lemma is obtained by the proof of Lemma~\ref{lem:bounded:sum}, by noting that only the points $\s_j$ that are in $E_k$ for $k \geq N$ give a non-zero contribution to the sum in the left-hand side of the display in the lemma.
\end{proof}

\begin{lem} \label{lem:bounded:max:singular:value}
Assume that Condition~\ref{cond:minimal:distance} holds.
Let $f_{kl}(\x;\btheta)$, $k,l=1,\dots,p$ be $p^2$ functions: $\IR^d \to \IR$ so that for all $\btheta \in \Theta$, $|f_{kl}(\x;\btheta)| \leq 1/(1+|\x|^{d+\alpha})$ and $f_{kl}(\x;\btheta)=f_{lk}(-\x;\btheta)$.
Let $\F_{\btheta}$ be the $np \times np$ matrix defined by, for $i = (k-1)n + a$ and $j = (l-1) n + b$, with $k,l=1,\dots,p$ and $a,b=1,\dots,n$, $f_{\btheta i j} = f_{kl}(\x_a-\x_b ; \btheta)$. 
Then, there exists a constant $A < \infty$ so that for any $n$, $\btheta$, $\rho_1(\F_{\btheta}) \leq A$.
\end{lem}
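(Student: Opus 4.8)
The plan is to exploit the symmetry of $\F_{\btheta}$ and then control its spectral norm by a row-sum argument, so that everything reduces to Lemma~\ref{lem:bounded:sum}. First I would observe that the hypothesis $f_{kl}(\x;\btheta) = f_{lk}(-\x;\btheta)$ forces $\F_{\btheta}$ to be symmetric. Indeed, the entry in position $(j,i)$, with $j = (l-1)n + b$ and $i = (k-1)n + a$, equals $f_{lk}(\x_b - \x_a;\btheta)$; applying the symmetry relation with indices $l,k$ gives $f_{lk}(\x_b - \x_a;\btheta) = f_{kl}(-(\x_b - \x_a);\btheta) = f_{kl}(\x_a - \x_b;\btheta)$, which is exactly the entry in position $(i,j)$. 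For a symmetric matrix the spectral norm is the largest absolute eigenvalue, so by Gershgorin's theorem (equivalently, from $\rho_1(\A) \le \sqrt{\|\A\|_1 \|\A\|_\infty}$ together with $\|\A\|_1 = \|\A\|_\infty$ in the symmetric case) one has
\[
\rho_1(\F_{\btheta}) \le \max_{i=1,\dots,np} \sum_{j=1}^{np} |f_{\btheta i j}|.
\]

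Next I would bound a single absolute row sum uniformly in the row index and in $\btheta$. Fixing $i = (k-1)n + a$ and splitting the column index $j = (l-1)n + b$ over the $p$ blocks $l=1,\dots,p$,
\[
\sum_{j=1}^{np} |f_{\btheta i j}| = \sum_{l=1}^p \sum_{b=1}^n |f_{kl}(\x_a - \x_b;\btheta)|.
\]
Each inner sum is precisely of the form treated in Lemma~\ref{lem:bounded:sum}, with the family $f_{kl}$ in the role of $f$, the point $\x_a$ in the role of $\v$, and the locations $\x_1,\dots,\x_n$ in the role of $\s_1,\dots,\s_n$; the decay bound $|f_{kl}(\x;\btheta)| \le 1/(1+|\x|^{d+\alpha})$ is the required hypothesis, and Condition~\ref{cond:minimal:distance} supplies the minimal separation $\Delta$. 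Hence each inner sum is at most the finite constant $C$ furnished by Lemma~\ref{lem:bounded:sum}, which depends only on $d$, $\Delta$, and $\alpha$, and is in particular independent of $n$, $a$, $k$, $l$, and $\btheta$.

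Summing the $p$ blocks then gives $\sum_{j=1}^{np} |f_{\btheta i j}| \le pC$ for every row $i$, so setting $A = pC$ yields $\rho_1(\F_{\btheta}) \le A$ uniformly in $n$ and $\btheta$, as claimed (recall $p$ is fixed throughout). I do not expect a genuine obstacle here: the only points requiring care are recognizing that the symmetry hypothesis is what reduces the spectral-norm bound to a single row-sum bound, and matching the indices so that Lemma~\ref{lem:bounded:sum} applies verbatim to each block. Should one prefer to avoid invoking symmetry, the identical conclusion follows from $\rho_1(\F_{\btheta}) \le \sqrt{\|\F_{\btheta}\|_1 \|\F_{\btheta}\|_\infty}$, bounding the column sums by the same argument with the roles of $\x_a$ and $\x_b$ interchanged.
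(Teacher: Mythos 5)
Your proof is correct and follows essentially the same route as the paper: symmetry of $\F_{\btheta}$ via the hypothesis $f_{kl}(\x;\btheta)=f_{lk}(-\x;\btheta)$, a Gershgorin-type reduction of $\rho_1(\F_{\btheta})$ to a maximal absolute row sum, splitting that row sum into $p$ block subsums, and bounding each block by the constant from Lemma~\ref{lem:bounded:sum}, giving $A = pC$ uniformly in $n$ and $\btheta$. The only cosmetic difference is that the paper treats the diagonal entry separately (using $|f_{\btheta ii}|\leq 1$) while you absorb it into the row sum, which is immaterial.
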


\begin{proof}[Proof of Lemma~\ref{lem:bounded:max:singular:value}]

Since $\F_{\btheta}$ is symmetric, $\rho_1(\F_{\btheta}) = \lambda_1(\F_{\btheta})$. Hence, because of Gershgorin circle theorem and of $| f_{\btheta k k} | \leq 1$ for any $n,\btheta$, it is sufficient to show that 
\[
\sup_{i,n,\btheta} \sum_{j=1,\dots,np; j \neq i} |f_{\btheta i j}| 
\]
is finite. By writing the sum above as the sum of $p$ subsums, it is sufficient to show that
\[
\sup_{k,l,a,n,\btheta} \sum_{j=1,\dots,n} |f_{kl} (\x_a - \x_j ; \btheta)| 
\]
is finite. This is true because of Lemma~\ref{lem:bounded:sum}.
\end{proof}

\begin{lem} \label{lem:bounded:derivative}
Assume that conditions~\ref{cond:ctheta:one:derivative},~\ref{cond:minimal:distance}, and~\ref{cond:minimal:eigenvalue} hold.
Then, as $n \to \infty$
\[
\sup_{i,\btheta} \left| \frac{\partial}{\partial \theta_i} L_{\btheta} \right| = O_p(1)
~ ~ ~ ~ ~
\mbox{and}
~ ~ ~ ~ ~
\sup_{i,\btheta } \left| \frac{\partial}{\partial \theta_i} \bar{L}_{\btheta} \right| = O_p(1).
\]

\end{lem}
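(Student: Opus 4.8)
The plan is to differentiate the two log-likelihoods explicitly and bound the resulting deterministic and random pieces separately, the key being that the random piece admits a uniform-in-$(i,\btheta)$ deterministic dominating factor times a single $\btheta$-free random variable. Using the standard identities $\partial_{\theta_i}\log\det(\bSigma_{\btheta})=\tr(\bSigma_{\btheta}^{-1}\partial_{\theta_i}\bSigma_{\btheta})$ and $\partial_{\theta_i}\bSigma_{\btheta}^{-1}=-\bSigma_{\btheta}^{-1}(\partial_{\theta_i}\bSigma_{\btheta})\bSigma_{\btheta}^{-1}$, I would first write
\[
\frac{\partial}{\partial\theta_i}L_{\btheta}
=\frac{1}{np}\tr\!\Big(\bSigma_{\btheta}^{-1}\frac{\partial\bSigma_{\btheta}}{\partial\theta_i}\Big)
-\frac{1}{np}\,\z\T\bSigma_{\btheta}^{-1}\frac{\partial\bSigma_{\btheta}}{\partial\theta_i}\bSigma_{\btheta}^{-1}\z,
\]
and the analogous expression for $\bar{L}_{\btheta}$ with $\K_{\btheta}$ in place of $\bSigma_{\btheta}$. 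The entries of $\partial_{\theta_i}\bSigma_{\btheta}$ are $\partial_{\theta_i}c_{kl}(\x_a-\x_b;\btheta)$, which by Condition~\ref{cond:ctheta:one:derivative} obey the same decay bound (up to the constant $A$) as a covariance, and $\partial_{\theta_i}\K_{\btheta}=(\partial_{\theta_i}\bSigma_{\btheta})\circ\bfT$ since $\bfT$ does not depend on $\btheta$.

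For the deterministic trace term I would bound $|\tr(\bSigma_{\btheta}^{-1}\partial_{\theta_i}\bSigma_{\btheta})|\le np\,\rho_1(\bSigma_{\btheta}^{-1})\,\rho_1(\partial_{\theta_i}\bSigma_{\btheta})$ (using $|\lambda_j|\le\rho_1$ and submultiplicativity of the spectral norm), so that after dividing by $np$ the factor $np$ cancels. Condition~\ref{cond:minimal:eigenvalue} gives $\rho_1(\bSigma_{\btheta}^{-1})=1/\lambda_{np}(\bSigma_{\btheta})\le1/\delta$, while Lemma~\ref{lem:bounded:max:singular:value}, applied to $\partial_{\theta_i}c_{kl}/A$ (its hypotheses hold by Conditions~\ref{cond:ctheta:one:derivative} and~\ref{cond:minimal:distance}, the required symmetry $\partial_{\theta_i}c_{kl}(\x;\btheta)=\partial_{\theta_i}c_{lk}(-\x;\btheta)$ being inherited from symmetry of $\bSigma_{\btheta}$), bounds $\rho_1(\partial_{\theta_i}\bSigma_{\btheta})$ by a finite constant uniformly in $i,n,\btheta$. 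Hence the trace term is bounded by a deterministic constant, uniformly in $i$ and $\btheta$.

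For the random quadratic term the crucial observation is that
\[
\frac{1}{np}\,\big|\z\T\bSigma_{\btheta}^{-1}(\partial_{\theta_i}\bSigma_{\btheta})\bSigma_{\btheta}^{-1}\z\big|
\le\rho_1\!\big(\bSigma_{\btheta}^{-1}\big)^2\,\rho_1\!\big(\partial_{\theta_i}\bSigma_{\btheta}\big)\,\frac{\|\z\|^2}{np}
\le M\,\frac{\|\z\|^2}{np},
\]
with $M<\infty$ uniform in $i,n,\btheta$ by the same two ingredients as above. The point is that the dominating random factor $\|\z\|^2/(np)$ depends on neither $i$ nor $\btheta$, so taking $\sup_{i,\btheta}$ is immediate and requires no Sobolev-type argument. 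Since $\E(\|\z\|^2/(np))=\tr(\bSigma_{\btheta_0})/(np)=\frac1p\sum_{k=1}^p c_{kk}(\0;\btheta_0)$ is a finite constant, the nonnegative quantity $\|\z\|^2/(np)$ is $O_p(1)$ by Markov's inequality, so $\sup_{i,\btheta}|\partial_{\theta_i}L_{\btheta}|=O_p(1)$.

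The tapered statement is handled identically: $\rho_1(\K_{\btheta}^{-1})\le1/\delta$ by Condition~\ref{cond:minimal:eigenvalue}, and $\rho_1(\partial_{\theta_i}\K_{\btheta})$ is bounded via Lemma~\ref{lem:bounded:max:singular:value} because its entries $\partial_{\theta_i}c_{kl}(\x_a-\x_b;\btheta)\,t_{kl}((\x_a-\x_b)/\gamma_n)$ still satisfy the required decay, using $|t_{kl}|\le1$ from Condition~\ref{cond:taper:no:rate}; the same domination by $M'\|\z\|^2/(np)$ then applies. I do not anticipate a serious obstacle here: the only thing to watch is that every spectral-norm bound is genuinely uniform in $i,n,\btheta$ (which it is, being a deterministic constant coming from Conditions~\ref{cond:ctheta:one:derivative}--\ref{cond:minimal:eigenvalue} and Lemma~\ref{lem:bounded:max:singular:value}), so that the single $\btheta$-free variable $\|\z\|^2/(np)$ absorbs the entire supremum.
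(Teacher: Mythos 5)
Your proof is correct and takes essentially the same route as the paper's: the same differentiation identities, the same spectral-norm bounds on the trace and quadratic terms via Condition~\ref{cond:minimal:eigenvalue} and Lemma~\ref{lem:bounded:max:singular:value} applied to the derivative decay in Condition~\ref{cond:ctheta:one:derivative}, and the same control of the single $\btheta$-free factor $\z\T\z/(np)$ as $O_p(1)$ through its constant mean. The only (immaterial) differences are that you spell out the tapered case with the $|t_{kl}(\x)|\leq 1$ bound, where the paper simply remarks the proof is identical, and that the paper writes the trace bound via $\rho_1\bigl(\bSigma_{\btheta}^{-1/2}\,\partial_{\theta_i}\bSigma_{\btheta}\,\bSigma_{\btheta}^{-1/2}\bigr)$ rather than your equivalent product bound.
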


\begin{proof}[Proof of Lemma~\ref{lem:bounded:derivative}]
We do the proof for $L_{\btheta}$ only since the proof for $\bar{L}_{\btheta}$ is identical. We have for any $i=1,\dots,q$,
\begin{eqnarray*}
\sup_{\btheta \in \Theta} \left| \frac{\partial}{\partial \theta_i} L_{\btheta} \right|
& = & \sup_{\btheta \in \Theta} \left| \frac{1}{np} \tr \left( \bSigma_{\btheta}^{-1} \frac{\partial \bSigma_{\btheta}}{\partial \theta_i} \right) - \frac{1}{np} \z \T \bSigma_{\btheta}^{-1} \frac{\partial \bSigma_{\btheta}}{\partial \theta_i} \bSigma_{\btheta}^{-1} \z \right| \\
& \leq &  \sup_{\btheta} \rho_1\left( \bSigma_{\btheta}^{-1/2} \frac{\partial \bSigma_{\btheta}}{\partial \theta_i} \bSigma_{\btheta}^{-1/2} \right) + \frac{1}{np} \z \T \z \sup_{\btheta} \rho_1\left( \bSigma_{\btheta}^{-1} \frac{\partial \bSigma_{\btheta}}{\partial \theta_i} \bSigma_{\btheta}^{-1} \right).
\end{eqnarray*}

Now, $(1/(np)) \z^T \z$ is bounded in probability since it is positive with constant mean value $ (1/p) \sum_{k=1}^p c_{kk}(\0;\btheta_0)$. The two $\rho_1(\cdot)$ in the above display are bounded uniformly in $\btheta$ because of $\rho_1(\C \D) \leq \rho_1(\C) \rho_1(\D)$, of Conditions~\ref{cond:ctheta:one:derivative}, \ref{cond:minimal:distance}, and~\ref{cond:minimal:eigenvalue} and of Lemma~\ref{lem:bounded:max:singular:value}.
\end{proof}

\begin{lem} \label{lem:taper:convergence:sum}
Let $\alpha>0$ and $\Delta>0$ be fixed.
Let $f(\x;\btheta)$ be a family of functions: $\IR^d \to \IR$ so that for all $\btheta$, $|f(\x;\btheta)| \leq 1/(1+|\x|^{d+ \alpha})$. Let $t(\x)$ be a fixed function: $\IR^d \to \IR$ that is continuous at $\0$ and so that $t(\0) = 1$ and $|t(\x)| \leq 1$. Let $S_m$ be the set of all sets of points $(\s_1,\dots,\s_m)$ so that for $i \neq j$ $|\s_i - \s_j| \geq \Delta$. Then,
\[
\sup_{m,(\s_1,\dots,\s_m) \in S_m,\v,\btheta} \, \, \, \, \sum_{i=1}^m \left| f(\v-\s_i ; \btheta) - f(\v-\s_i ; \btheta) t\big( (\v-\s_i)/\gamma\big) \right|
\]
goes to $0$ as $\gamma \to \infty$.

\end{lem}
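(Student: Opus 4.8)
The plan is to split the sum into a \emph{far} part, where $\v-\s_i$ is large and the polynomial decay of $f$ dominates, and a \emph{near} part, where $\v-\s_i$ is bounded and the continuity of $t$ at $\0$ forces $t\big((\v-\s_i)/\gamma\big)$ to be close to $1$. Throughout, I would write $\h_i=\v-\s_i$ and note that the summand equals $|f(\h_i;\btheta)|\,|1-t(\h_i/\gamma)|$, since the two $f$-factors coincide. The key structural observation is that whenever the $\s_i$ are pairwise $\Delta$-separated, so are the shifted points $\h_i$; moreover the bounding function $1/(1+|\cdot|^{d+\alpha})$ is even, so $|f(\h_i;\btheta)|\le 1/(1+|\h_i|^{d+\alpha})$. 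Hence Lemmas~\ref{lem:bounded:sum} and~\ref{lem:vanishing:rest} apply, with all their bounds uniform in $m$, in $(\s_1,\dots,\s_m)\in S_m$, in $\v$, and in $\btheta$.

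Fix $\epsilon>0$. For the far part, I would first invoke Lemma~\ref{lem:vanishing:rest} to choose $N\in\IN^+$, depending only on $d,\Delta,\alpha,\epsilon$, so that
\[
\sup_{m,(\s_1,\dots,\s_m)\in S_m,\v,\btheta}\ \sum_{i:\,|\h_i|>N-1}|f(\h_i;\btheta)|\ \le\ \frac{\epsilon}{4}.
\]
Since $|t|\le 1$ gives $|1-t(\h_i/\gamma)|\le 2$, the far portion of the sum is then at most $2\cdot(\epsilon/4)=\epsilon/2$, uniformly in $\gamma$ and in all the remaining quantities.

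For the near part, Lemma~\ref{lem:bounded:sum} supplies a finite constant $C$, depending only on $d,\Delta,\alpha$, with $\sup \sum_{i=1}^m |f(\h_i;\btheta)|\le C$ uniformly (the case $C=0$ being trivial). Using the continuity of $t$ at $\0$ together with $t(\0)=1$, I would pick $\eta>0$ so that $|1-t(\h_i/\gamma)|\le \epsilon/(2C)$ whenever $|\h_i/\gamma|\le\eta$. For an index $i$ with $|\h_i|\le N-1$ we have $|\h_i/\gamma|\le (N-1)/\gamma\le\eta$ as soon as $\gamma\ge (N-1)/\eta$, so for such $\gamma$ the near portion is bounded by $(\epsilon/(2C))\sum_{i:\,|\h_i|\le N-1}|f(\h_i;\btheta)|\le (\epsilon/(2C))\,C=\epsilon/2$. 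Adding the two contributions yields a total of at most $\epsilon$ for every $\gamma\ge(N-1)/\eta$, uniformly over $m$, $(\s_i)\in S_m$, $\v$ and $\btheta$; since $\epsilon$ was arbitrary, the supremum tends to $0$ as $\gamma\to\infty$.

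I do not expect a genuine obstacle here beyond carefully tracking uniformity: the only subtlety is ensuring that the truncation level $N$ and the constant $C$ depend solely on $d,\Delta,\alpha$ (and $\epsilon$) and never on $\v$, $\btheta$, or the configuration of points. This is precisely what the $\Delta$-separation provides through Lemmas~\ref{lem:bounded:sum} and~\ref{lem:vanishing:rest}, which bound the number of points in each shell around $\v$ independently of $\v$; once that uniformity is in hand, the two-region estimate above closes the argument.
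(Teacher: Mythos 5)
Your proof is correct and follows essentially the same route as the paper's: a far/near split with Lemma~\ref{lem:vanishing:rest} controlling the tail (using $|1-t|\leq 2$) and continuity of $t$ at $\0$ handling the near points uniformly once $\gamma\geq(N-1)/\eta$. The only cosmetic difference is that you bound the near part via the uniform constant $C$ from Lemma~\ref{lem:bounded:sum}, whereas the paper counts the points within radius $M-1$ (the quantity $\tilde{N}_{M-1}$, finite by $\Delta$-separation) and uses $|f|\leq 1$; both close the argument identically.
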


\begin{proof}[Proof of Lemma~\ref{lem:taper:convergence:sum}]
Let $\epsilon >0$ be fixed. Because of Lemma~\ref{lem:vanishing:rest}, we can find $M \in \IN^+$ so that 
\[
\sup_{m,(\s_1,\dots,\s_m) \in S_m,\v,\btheta} \, \, \, \, \sum_{i=1,\dots,m;|\v-\s_i| > M-1} \left| f(\v-\s_i ; \btheta) - f(\v-\s_i ; \btheta) t\big((\v-\s_i)/\gamma\big) \right| \leq \epsilon.
\]
Because $t$ is continuous at $\0$, we have for $\gamma$ large enough  and for $|\v - \s_i| \leq M-1$
\[
\left| 1 - t \big( (\s_i-\v) /\gamma\big)\right| \leq \frac{\epsilon}{\tilde{N}_{M-1}},
\]
where $\tilde{N}_{M-1}$ is the maximum numbers of points $\s_j$ so that $|\s_j-\v| \leq M-1$, over all possible $m$, $\v$ and
$(\s_1,\dots,\s_m) \in S_m$. Putting the two bounds together, and using $|f(\x;\btheta)| \leq 1$ we obtain, for $\gamma$ large enough,
\[
\sup_{m,(\s_1,\dots,\s_m) \in S_m,\btheta} \, \, \, \, \sum_{i=1}^m \left| f(\v-\s_i ; \btheta) - f(\v-\s_i ; \btheta) t\big( (\v-\s_i)/\gamma\big) \right|
\leq \epsilon + \tilde{N}_{M-1}\frac{\epsilon}{\tilde{N}_{M-1}},
\]
which finishes the proof.
\end{proof}

\begin{lem} \label{lem:vanishing:max:singular:value}
Assume that Conditions~\ref{cond:taper:no:rate} and~\ref{cond:minimal:distance} hold.
Let $f_{kl}(\x;\btheta)$ and $\F_{\btheta}$ be as in Lemma~\ref{lem:bounded:max:singular:value}. Let $t_{kl}(\x)$, $k,l=1,\dots,p$, be the $p^2$ taper functions satisfying Condition~\ref{cond:taper:no:rate}.
Let $\gamma$ be the taper range, also satisfying Condition~\ref{cond:taper:no:rate}.
Let $\G_{\btheta}$ be the $np \times np$ matrix defined by, for $i = (k-1)n + a$ and $j = (l-1) n + b$, with $k,l=1,\dots,p$ and $a,b=1,\dots,n$, $g_{\btheta i j} = f_{kl}(\x_a-\x_b ; \btheta) t_{kl}\big( (\x_a- \x_b ) / \gamma \big) $.
Then, $\sup_{\btheta} \rho_1(\F_{\btheta}  - \G_{\btheta}) \to_{n \to \infty} 0$.
\end{lem}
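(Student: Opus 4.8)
The plan is to bound the spectral norm of the difference matrix by a largest absolute row (and column) sum, and then to show that this sum vanishes uniformly by appealing to Lemma~\ref{lem:taper:convergence:sum}. Writing out the entries, $\F_{\btheta} - \G_{\btheta}$ has, in position $(i,j)$ with $i=(k-1)n+a$ and $j=(l-1)n+b$, the value $f_{kl}(\x_a-\x_b;\btheta)\bigl[1 - t_{kl}\bigl((\x_a-\x_b)/\gamma\bigr)\bigr]$. Since $|f_{kl}(\x;\btheta)|\le 1/(1+|\x|^{d+\alpha})$ and $|t_{kl}|\le 1$, I would control $\rho_1(\F_{\btheta}-\G_{\btheta})$ by $\sqrt{R R'}$, where $R$ and $R'$ are the largest absolute row sum and the largest absolute column sum; if one uses the symmetry $t_{kl}(\x)=t_{lk}(-\x)$ that makes $\G_{\btheta}$ a genuine covariance taper, the matrix is symmetric and Gershgorin's theorem gives directly $\rho_1 \le R$, exactly as in the proof of Lemma~\ref{lem:bounded:max:singular:value}.

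Next I would reduce the row sum to finitely many pieces to which Lemma~\ref{lem:taper:convergence:sum} applies verbatim. Fixing a row index $i=(k-1)n+a$, the absolute row sum splits as $\sum_{l=1}^p \sum_{b=1}^n |f_{kl}(\x_a-\x_b;\btheta)|\,\bigl|1 - t_{kl}\bigl((\x_a-\x_b)/\gamma\bigr)\bigr|$, a sum of $p$ inner sums. For each fixed pair $(k,l)$ the inner sum is precisely of the form treated in Lemma~\ref{lem:taper:convergence:sum}, with $f=f_{kl}$, $t=t_{kl}$, $\v=\x_a$ and the points $\s_b=\x_b$. Condition~\ref{cond:minimal:distance} guarantees $|\x_a-\x_b|\ge\Delta$ for $a\ne b$, so the configuration is $\Delta$-separated and lies in the admissible set of that lemma, while the hypotheses on $f_{kl}$ and $t_{kl}$ hold by the assumptions of Lemma~\ref{lem:bounded:max:singular:value} and by Condition~\ref{cond:taper:no:rate}.

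Finally I would assemble the pieces. By Lemma~\ref{lem:taper:convergence:sum} each inner sum is at most a quantity $\eta_{kl}(\gamma)$ depending only on $f_{kl}$, $t_{kl}$, $\Delta$, $\alpha$, $d$ and $\gamma$, which tends to $0$ as $\gamma\to\infty$; summing over the finitely many indices yields $\sup_{i,\btheta}\sum_{j}|(\F_{\btheta}-\G_{\btheta})_{ij}| \le \sum_{k,l=1}^p \eta_{kl}(\gamma)$, and the identical argument bounds the column sums. Since $\gamma=\gamma_n\to\infty$ by Condition~\ref{cond:taper:no:rate}, combining with the first paragraph gives $\sup_{\btheta}\rho_1(\F_{\btheta}-\G_{\btheta})\to 0$ as $n\to\infty$.

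The only genuine subtlety is that the convergence must be uniform over $\btheta$ and over the growing, arbitrarily placed observation set, whose cardinality tends to infinity. This is exactly the difficulty that has already been isolated and absorbed into Lemma~\ref{lem:taper:convergence:sum}, whose statement is deliberately phrased as a supremum over all $\Delta$-separated configurations, over $\v$, and over $\btheta$. Granting that lemma, the present proof is essentially the bookkeeping of reducing a spectral norm to absolute row and column sums; the hardest part of the argument lives upstream in Lemma~\ref{lem:taper:convergence:sum} (which in turn rests on the tail estimate of Lemma~\ref{lem:vanishing:rest} and the continuity of the tapers at the origin).
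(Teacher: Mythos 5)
Your proof is correct and follows essentially the same route as the paper, whose (very terse) proof likewise bounds $\sup_{\btheta}\rho_1(\F_{\btheta}-\G_{\btheta})$ via the Gershgorin circle theorem applied to row sums, exactly as in the proof of Lemma~\ref{lem:bounded:max:singular:value}, and then invokes Lemma~\ref{lem:taper:convergence:sum} on each of the $p$ inner sums over a fixed pair $(k,l)$. Your added care --- the Schur-test fallback $\rho_1\le\sqrt{RR'}$ in case the symmetry $t_{kl}(\x)=t_{lk}(-\x)$ is not taken as given, and the explicit note that uniformity over $\btheta$ and over all $\Delta$-separated configurations is already built into the statement of Lemma~\ref{lem:taper:convergence:sum} --- are correct refinements of the same argument rather than a different approach.
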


\begin{proof}[Proof of Lemma~\ref{lem:vanishing:max:singular:value}]

The lemma is a consequence of Lemma~\ref{lem:taper:convergence:sum}. The proof is based on Gershgorin circle theorem as for the proof of Lemma~\ref{lem:bounded:max:singular:value}.
\end{proof}

\begin{lem} \label{lem:difference:frobenius:norm}
Assume that Conditions~\ref{cond:ctheta:one:derivative}, \ref{cond:taper:no:rate}, and~\ref{cond:minimal:distance} hold. Then, $\sup_{\btheta} \frac{1}{np} || \bSigma_{\btheta} - \K_{\btheta} ||_F^2$ goes to $0$ as $n \to \infty$.
\end{lem}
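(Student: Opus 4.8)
The plan is to expand the Frobenius norm entrywise, use the Schur-product structure $\K_{\btheta}=\bSigma_{\btheta}\circ\bfT$ to factor out the taper, and then control the resulting sums with Lemma~\ref{lem:taper:convergence:sum}. Writing $i=(k-1)n+a$ and $j=(l-1)n+b$, the generic entry of the difference is $\sigma_{\btheta ij}-k_{\btheta ij}=c_{kl}(\x_a-\x_b;\btheta)\big(1-t_{kl}((\x_a-\x_b)/\gamma_n)\big)$, so that
\[
\frac{1}{np}\,\|\bSigma_{\btheta}-\K_{\btheta}\|_F^2=\frac{1}{np}\sum_{k=1}^{p}\sum_{a=1}^{n}S_{k,a}(\btheta),\qquad S_{k,a}(\btheta):=\sum_{l=1}^{p}\sum_{b=1}^{n}\big|c_{kl}(\x_a-\x_b;\btheta)\big|^2\big(1-t_{kl}((\x_a-\x_b)/\gamma_n)\big)^2 .
\]
Since there are exactly $np$ row indices $(k,a)$, the average on the right-hand side is at most $\sup_{k,a,\btheta}S_{k,a}(\btheta)$, so it suffices to prove that this maximal row sum tends to $0$. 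The normalization $1/(np)$ is essential here, as it converts the sum of the $np$ (individually vanishing) row sums into their average.

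The second step linearizes the squares. Because $|c_{kl}(\x;\btheta)|\le A$ by Condition~\ref{cond:ctheta:one:derivative} and $|1-t_{kl}|\le 2$ by Condition~\ref{cond:taper:no:rate}, I would bound $|c_{kl}|^2(1-t_{kl})^2\le 2A\,|c_{kl}|\,|1-t_{kl}|$, which gives
\[
S_{k,a}(\btheta)\le 2A\sum_{l=1}^{p}\sum_{b=1}^{n}\big|c_{kl}(\x_a-\x_b;\btheta)\big|\,\big|1-t_{kl}((\x_a-\x_b)/\gamma_n)\big| .
\]
This trades the quadratic Frobenius sum for first-power sums of exactly the type handled by Lemma~\ref{lem:taper:convergence:sum}. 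For each fixed pair $(k,l)$ I would then apply that lemma with $f(\cdot;\btheta)=c_{kl}(\cdot;\btheta)/A$ (whose decay bound $|f|\le 1/(1+|\x|^{d+\alpha})$ comes from Condition~\ref{cond:ctheta:one:derivative}), with $t=t_{kl}$, center $\v=\x_a$, and point set $\{\x_b\}_{b=1}^{n}$, which lies in $S_n$ by the minimal-distance Condition~\ref{cond:minimal:distance}. The lemma bounds the inner sum over $b$, uniformly in $a$ and $\btheta$, by a quantity tending to $0$ as $\gamma_n\to\infty$. Summing over the finitely many $l$ and taking the supremum over the finitely many $k$ yields $\sup_{k,a,\btheta}S_{k,a}(\btheta)\to 0$; since $\gamma_n\to\infty$ as $n\to\infty$ by Condition~\ref{cond:taper:no:rate}, the claim follows.

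The main obstacle is the mismatch between the quadratic Frobenius norm and the first-power sums that Lemma~\ref{lem:taper:convergence:sum} controls; the elementary linearization above, relying on the uniform boundedness of both $c_{kl}$ and $1-t_{kl}$, is precisely what removes it. A minor point to verify is that the relevant configuration is admissible: the observation points satisfy the separation required by $S_n$ via Condition~\ref{cond:minimal:distance}, and the diagonal contribution ($a=b$, $k=l$) vanishes automatically because $t_{kl}(\0)=1$, so no special treatment of near-coincident points is needed.
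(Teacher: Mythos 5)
Your proof is correct, but it takes a more elementary route than the paper's. The paper disposes of this lemma in a single line, as a corollary of the spectral-norm result Lemma~\ref{lem:vanishing:max:singular:value}: since $\bSigma_{\btheta}-\K_{\btheta}$ is an $np\times np$ symmetric matrix, $\frac{1}{np}\|\bSigma_{\btheta}-\K_{\btheta}\|_F^2=\frac{1}{np}\sum_{i=1}^{np}\lambda_i(\bSigma_{\btheta}-\K_{\btheta})^2\leq\rho_1(\bSigma_{\btheta}-\K_{\btheta})^2$, and the right-hand side tends to $0$ uniformly in $\btheta$ by that lemma, which is itself proved from Lemma~\ref{lem:taper:convergence:sum} via the Gershgorin circle theorem. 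You bypass the spectral norm entirely: you expand the Frobenius norm entrywise, bound the average of the $np$ row sums by the maximal row sum, and linearize the squares via $|c_{kl}|^2(1-t_{kl})^2\leq 2A\,|c_{kl}|\,|1-t_{kl}|$ so that Lemma~\ref{lem:taper:convergence:sum} applies directly; your verifications check out (the rescaling $f=c_{kl}/A$ meets the decay hypothesis by Condition~\ref{cond:ctheta:one:derivative}, the point set is admissible by Condition~\ref{cond:minimal:distance}, and the supremum over $\v$ in the lemma covers the collocated case $\v=\x_a$). The two arguments are quantitatively equivalent, since the Gershgorin bound used by the paper is precisely the maximal absolute row sum that you control; what yours buys is independence from eigenvalue theory and from the symmetry of the difference matrix (the paper needs $f_{kl}(\x;\btheta)=f_{lk}(-\x;\btheta)$ for its Gershgorin step), while the paper's route buys brevity, because Lemma~\ref{lem:vanishing:max:singular:value} must be proved anyway for the generic term \eqref{eq:generic:deux} in the proof of Theorem~\ref{thm:convergence:IMSE}, so the Frobenius statement comes for free. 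One cosmetic remark: the entries with $a=b$ vanish for \emph{all} pairs $k,l$, not only on the literal diagonal $k=l$, since Condition~\ref{cond:taper:no:rate} imposes $t_{kl}(\0)=1$ for every $k,l$; as you note, no special treatment is needed either way.
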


\begin{proof}[Proof of Lemma~\ref{lem:difference:frobenius:norm}]

The lemma is a consequence of Lemma~\ref{lem:vanishing:max:singular:value}.
\end{proof}

\renewcommand{\baselinestretch}{1.1}\rm

\end{document}